\newcommand{\K}{{\mathbf K}^\crit}
\newcommand{\szego}{Szeg\"o }
\newcommand{\kahler}{K\"ahler }
\newcommand{\PP}{{\mathbb P}}
\newcommand{\R}{{\mathbb R}}
\newcommand{\C}{{\mathbb C}}
\newcommand{\Z}{{\mathbb Z}}
\newcommand{\dbar}{\bar\partial}
\newcommand{\ddbar}{\partial\dbar}
\newcommand{\diag}{{\operatorname{diag}}}
\renewcommand{\H}{{\mathbf H}}
\newcommand{\FS}{{{\operatorname{FS}}}}
\renewcommand{\phi}{\varphi}
\newcommand{\bcal}{\mathcal{B}}
\newcommand{\fcal}{\mathcal{F}}
\newcommand{\gcal}{\mathcal{G}}
\newcommand{\hcal}{\mathcal{H}}
\newcommand{\ical}{\mathcal{I}}
\newcommand{\lcal}{\mathcal{L}}
\newcommand{\pcal}{\mathcal{P}}
\newcommand{\rcal}{\mathcal{R}}
\newcommand{\ocal}{\mathcal{O}}
\newcommand{\qcal}{\mathcal{Q}}
\newcommand{\al}{\alpha}
\newcommand{\be}{\beta}
\def    \Z  {{\mathbb Z}}
\def    \R  {{\mathbb R}}
\def    \C  {{\mathbb C}}
\newtheorem{theo}{{\sc Theorem}}[section]
\newtheorem{cor}[theo]{{\sc Corollary}}
\newtheorem{lem}[theo]{{\sc Lemma}}
\newtheorem{prop}[theo]{{\sc Proposition}}
\newenvironment{defin}{\medskip\noindent{\it Definition:\/} }{\medskip}
\def\HungarianAccent#1{{\accent"7D #1}}
\def\o{\omega}
\def\Szego{Szeg\HungarianAccent{o} }
\def\K{K\"ahler }
\def\ra{\rightarrow}
\def\a{\alpha}
\def\th{\theta}
\def\vp{\varphi}
\def\pa{\partial}
\def\isom{\cong}
\def\w{\wedge}
\def\i{\sqrt{-1}}
\def\text{\textstyle}
\def\equationspace{\medskip\noindent}
\def\ra{\rightarrow}
\def\isom{\cong}
\def\dis{\displaystyle}
\def\calQ{\qcal}
\def\calP{\pcal}
\def\calR{\rcal}
\def\calL{\lcal}
 \def\H{\hcal}
\newcommand{\Hilb}{{{\operatorname{Hilb}}}}
\newcommand{\BF}{{{\operatorname{BF}}}}
\title[Bergman approximations of harmonic maps 
]
{Bergman approximations of harmonic maps into the space of K\"ahler metrics on toric varieties}
\author{Yanir A. Rubinstein }
\author{Steve Zelditch }
\address{Department of Mathematics, Massachusetts Institute of Technology,
Cambridge, MA 02139, USA}
\curraddr{Department of Mathematics, Princeton University, Princeton, NJ 08544, USA}
 \email{yanir@member.ams.org}
\address{Department of Mathematics, Johns Hopkins University, Baltimore,
MD 21218, USA} \email{zelditch@math.jhu.edu}
\thanks{\hglue-12pt March 8, 2008}
\thanks{\hglue-12pt Mathematics Subject Classification (2000): 
Primary 32Q15. 
Secondary 14M25, 
32W20, 
53D50, 
58B20, 
58E20.%
}
\begin{document}

\maketitle

\begin{abstract}
We generalize the results of Song-Zelditch on geodesics in spaces
of K\"ahler metrics on toric varieties to harmonic maps of any
compact Riemannian manifold with boundary into the space of
K\"ahler metrics on a toric variety. 
We show that the harmonic map equation can always be solved and that 
such maps may be  approximated in the $C^2$ topology by harmonic
maps into the spaces of Bergman metrics. In particular,
WZW maps, or equivalently solutions of a homogeneous
Monge-Amp\`ere equation on the product of the manifold with a
Riemann surface with $S^1$ boundary admit such approximations.
We also show that the Eells-Sampson flow on the space of \K potentials
is transformed to the usual heat flow on the space of symplectic potentials under
the Legendre transform.
\end{abstract}

\section{Introduction}

Our main purpose in this article is to prove that the Dirichlet problem
for a harmonic map $\phi: N\ra \hcal(T)$ of any compact Riemannian manifold with boundary
$N$ into the infinite-dimensional space $\hcal(T)$ of toric  K\"ahler
metrics on a smooth projective toric variety $(M, \omega)$ admits a smooth
solution that may be
approximated in $C^2(N \times M)$ by a special squence of harmonic
maps $\phi_k: N \to \bcal_k(T)\subset \H(T)$ into the finite-dimensional subspaces
of Bergman (or Fubini-Study)  metrics induced from projective
embeddings. As a special case, we show that the WZW  (Wess-Zumino-Witten) equation,
 or equivalently the homogeneous complex
Monge-Amp\`ere equation on the product of the manifold with a
Riemann surface with $S^1$ boundary, admits such a solution as well as such approximations. 
This generalizes previous work of Song-Zelditch in the
case of geodesics, i.e., where $N = [0, 1]$.

Before stating our results, we briefly recall the background to
our problem. Let $(M,\o)$ be a compact closed \K manifold of
dimension $m$  with integral \kahler form and let $(L, h_0)  \to
M$ be an ample Hermitian  holomorphic line bundle with
$\omega_{h_0} = \omega$ satisfying $[\o]= c_1(L)$, where
$\omega_{h_0}=-\frac{\i}{2\pi}\ddbar\log h_0$ is the curvature $(1,1)$-form of $h_0$. Any other
hermitian metric on $L$ may be expressed as  $h_{\phi} = e^{-\phi}
h_0$, with $\vp$ a smooth function on $M$.   Following Mabuchi, Semmes and Donaldson \cite{M,S2,D1} one
may regard the space

\begin{equation}
 \label{HCALDEF} \hcal_{\o}:= \{\phi\in C^{\infty}
(M) : \omega_\phi\ = \ \omega+\frac{\sqrt{-1}}{2\pi}\ddbar \phi>0\
\}
\end{equation}

\noindent
of potentials of \K metrics in a fixed cohomology class as an
infinite-dimensional symmetric space dual to the group of Hamiltonian
diffeomorphisms $\hbox{Ham}(M,\o)$. We will henceforth usually identify $\hcal_{\o}$
with the space of Hermitian metrics on $L$ of positive curvature  

$$
\hcal:=\{\,h\,:\, h=h_0e^{-\vp}, \vp\in\hcal_{\o}\}.
$$

\equationspace
The symmetric space Riemannian metric $g_{L^2}$
is defined by

\begin{equation} \label{metric} 
g_{L^2}(\zeta,\eta)_{\vp}:= \frac1V\int_M
\zeta\eta\, {\omega_{\phi}^m},\qquad \phi \in \hcal_{\o},\quad
\zeta,\eta \in T_{\phi} \hcal_{\o}\isom C^\infty(M).
\end{equation}

\equationspace
A basic idea, considered by Yau, Tian and Donaldson, is to approximate
transcendental objects defined on $\hcal$ by algebraic
objects defined on the finite dimensional symmetric spaces
$\bcal_k$ of Bergman (or Fubini-Study) metrics on $L$. To define
them, we use the following notation:
$H^0(M, L^k)$ is the space of  holomorphic
sections of the $k$-th power $L^k \to M$,  $d_k+1= \dim H^0(M,
L^k)$ and  $\bcal H^0(M, L^k)$ is  the manifold of all bases
$\underline{s} = \{s_0, \dots, s_{d_k}\}$ of $H^0(M, L^k)$. A
basis $\underline{s}$ determines a Bergman metric
\begin{equation} \label{FSDEF}  
h_{\underline{s}} := (\iota_{\underline{s}}^\star
h_{\FS})^{1/k} = \frac{h_0}{\left( \sum_{j = 0}^{d_k}
|s_j(z)|^2_{h_0^k} \right)^{1/k}}, \end{equation} 
as the pullback
of the Fubini-Study metric $h_{\FS}$ on the hyperplane bundle $\ocal(1) \to \PP^{d_k}$
under the Kodaira embedding
\begin{equation} \label{KODEMB} \iota_{\underline{s}}: M \to \PP^{d_k},\;\;z \to [s_0(z),
\dots, s_{d_k}(z)].  \end{equation}  
The space of all  Bergman metrics defined by  $\bcal H^0(M, L^k)$ is denoted by
\begin{equation}
 \label{BERGMETDEF} \; {\mathcal
B}_k = \{h_{\underline{s}}, \;\; \underline{s} \in \bcal H^0(M,
L^k) \}.
\end{equation}
We may  identify the space ${\mathcal B}_k$ with the symmetric space $GL(d_k + 1, \C) / U(d_k + 1)$ 
since $GL(d_k + 1, \C)$ acts
transitively on the set of bases, while $\iota_{\underline{s}}^\star
h_{\FS}$ is unchanged if we replace the basis $\underline{s}$ by a
unitary change of basis.

A natural question is:  to what
extent can the geometry of $\hcal$ be approximated by that of the
spaces $\bcal_k$ of `algebro-geometric' metrics? At the most basic
level of individual points, it  follows by the Tian Asymptotic
Isometry Theorem \cite{T}, and its subsequent refinements \cite{C,Z},
that a metric $h \in \hcal$ can be approximated in a canonical way
by a sequence of Bergman metrics $h_k$. 
The geometry of a Riemannian manifold is reflected to a large extent
by its geodesics and more generally by the specification of the harmonic maps
into it, involving the analysis of certain nonlinear elliptic PDEs.
In this article we describe how solutions to these PDEs on $\hcal$ can be approximated in
an algebro-geometric manner by a sequence of solutions to PDEs on $\bcal_{k}$, in the setting
of a toric variety. 

We will need the canonical
sequence mentioned above to state our results, so we recall how it is constructed.
First, observe that  ${\mathcal B}_k$ is isomorphic to the
symmetric space  $\ical_k$ of
Hermitian inner products 
on $H^0(M, L^k)$, the correspondence being that a basis is
identified with an inner product for which the basis is 
Hermitian orthonormal. Define the maps
\begin{equation}
\label{HilbEq} \Hilb_k: \hcal \to \ical_k 
\end{equation}
by the rule that a Hermitian metric  $h \in \hcal$ induces the
metrics $h^k$ on $L^k$ and the   inner products on $H^0(M, L^k)$,
\begin{equation} \label{HILBDEF} 
||s||^2_{\Hilb_k(h)}:=  \frac1V\int_M |s(z)|_{h^k}^2 (k\o_h)^m,
\end{equation}
where $V=\int_M\o_h^m$.
An inner product $I =(\,\cdot\,,\,\cdot\,)$ on $H^0(M, L^k)$  determines an
$I$-orthonormal  basis $\underline{s} = \underline{s}_I$ of
$H^0(M, L^k)$, an associated Kodaira embedding (\ref{KODEMB}),
as well as a Bergman metric given by
\begin{equation} 
\label{FSDEF} 
\FS_k(I):= h_{\underline{s}_I}. 
\end{equation}
Tian's asymptotic isometry theorem then states that $\FS_k \circ \Hilb_k(h) \to h$ in
the $C^{\infty}(M)$ topology with complete asymptotic expansions \cite{T,C,Z}.

The question was then raised \cite{AT, D2, PS1} whether geodesics of
$\hcal$ could be well-approximated by the one-parameter
subgroup geodesics of $\bcal_k$. Geodesics of $\hcal$ are
given by solutions of

\begin{equation}
\label{GeodesicEq}
\ddot\vp-{\text\frac12}|\nabla\dot\vp|^2=0, 
\quad \vp(0,\,\cdot\,)=\vp_0,\; \vp(1,\,\cdot\,)=\vp_1,\quad \vp_0,\vp_1\in\hcal,
\end{equation}

\medskip
\noindent
where $\vp$ is considered as a map from $[0,1]$ to $\hcal$, or equivalently as a function
on $[0,1]\times M$.
Phong-Sturm 
studied this problem in depth and proved that a sequence of geodesics in $\bcal_k$ converges
weakly (almost everywhere) to a prescribed geodesic 
of $\hcal$ \cite{PS1}. 
Song-Zelditch proved that the same sequence converges in
$C^2([0, 1] \times M)$ when the manifold is toric and one restricts to the torus-invariant
metrics \cite{SoZ2}, and Berndtsson
used a different argument to prove that geodesics in $\hcal$
can be $C^0$-approximated by geodesics in spaces of Bergman
metrics induced by embeddings by sections of $L^k \otimes K_M$,
where $K_M$ is the canonical bundle of $M$ \cite{B}.
In addition, Phong-Sturm and Song-Zelditch
proved approximation results for geodesic rays constructed from
test configurations \cite{PS2,SoZ3}.

A harmonic map between two Riemannian
manifolds $(N,f)$ and $(\tilde N,\tilde f)$ is a critical point of the energy functional

$$
E(a)=\int_{N} |da|^2_{f\otimes a^\star \tilde f}\; dV_{N,f},
$$

\medskip

\noindent
on the space of smooth maps $a$ from $N$ to $\tilde N$ \cite{ES}.
The problem we study   in this article is whether   higher
dimensional harmonic maps of general compact Riemannian manifolds
$N$ with boundary $\partial N$ into $\hcal$ admit similar kinds of
`algebro-geometric' approximations.
For maps into the space of toric \K metrics on
toric manifolds, we obtain an affirmative solution at the
same level of precision as in the case of geodesics studied by Song-Zelditch.

To describe our results,  let us define our
setting more precisely.  We recall that a toric variety $M$ of
complex dimension $m$ carries the holomorphic action of a complex torus
$(\C^\star)^m$ with an open dense orbit. We let $T = (S^1)^m$ be the
associated real torus. Objects associated to $M$ are called toric
if they are invariant  with respect to $T$. We let $\omega$
denote a toric integral \kahler form on $M$, and  let $L$ be an
ample line bundle with $[\omega]=c_1(L)$. We then define
the space of toric Hermitian metrics on $L$,

\begin{equation}
\hcal(T) = \{h \in \hcal: t^* h = h, \;\; \forall t \in T\}.
\end{equation}

\equationspace
This is a flat submanifold of $\hcal$.
As before we will frequently identify an element of $\hcal(T)$ with
the corresponding element of $\hcal_\o$. Moreover, we will
often identify an element with the local \K potential defined on 
the open orbit of the complex torus; see Section \ref{LAG}.
We also denote by
$\bcal_{k}(T) \subset \bcal_k$ the subspace of Bergman metrics
defined by $T$ invariant inner products, or equivalently by
$T$-equivariant embeddings. Any such embedding is induced by
the basis of toric monomials $\{\chi_\a\}_{\a\in kP\cap\Z^m}$ of $H^0(M,L^k)$,
where $P$ denotes the moment polytope associated
to the action, and $kP$ denotes its dilation by a factor of $k$.
Finally, let $(N,f)$ be a compact oriented Riemannian manifold with
smooth boundary, let $G(y,q)$ denote the positive Dirichlet  Green kernel 
for the Laplacian $\Delta_N:=\Delta_{N,f}$ (see Section \ref{LAG}),
and let $dV_{\pa N,f}$ denote the induced measure on $\partial N$ from the restriction of the Riemannian
volume form $dV_{N,f}$ from $N$ to $\pa N$. The main result of this article is:

\begin{theo} 
Let $(M, L, \omega)$ be a polarized toric \kahler
manifold, and let $(N,f)$ be a compact oriented smooth
Riemannian manifold with smooth boundary $\partial N$. Let
$\psi:\partial N\rightarrow \hcal(T)$ denote a fixed smooth map.
There exists a harmonic map $\vp:N\rightarrow\hcal(T)$ with
$\vp|_{\partial N}=\psi$ and harmonic maps $\vp_k:N\rightarrow
\bcal_{k}(T)$ with $\vp_k|_{\partial N}=\FS_k\circ
\Hilb_k(\psi)$, given on the open orbit by
\begin{equation}
\label{ApproximateHarmonicMapsEq}
\vp_k(y,z)
=
\frac1k\log\sum_{\alpha\in kP\cap \Z^m}
|\chi_\alpha(z)|_{h_0^k}^2
\exp\Big(\int_{\partial N}\pa_{\nu_q}G(y,q)\log||\chi_\alpha||^2_{h^k_{\psi(q)}}dV_{\pa N,f}(q)\Big),
\end{equation}
and one has
$$
\lim_{k\rightarrow\infty}\vp_k=\vp,
$$
in the $C^2(N\times M)$ topology.
\end{theo}

A motivating special case is the unit disc $N=D:=\{z\in\C\,:\, |z|\le 1\}$. It has
been the subject of intensive studies (e.g., \cite{Ch,CT,D3}).
Then the map $\vp$ corresponds to certain foliations by
holomorphic discs arising from a solution of a certain homogeneous
complex Monge-Amp\`ere (HCMA) equation. To describe this case, let $\pi_2:D\times M\ra M$
denote the projection onto the second factor and consider the HCMA equation,

\begin{equation}
\label{HCMAFirstEq}
(\pi_2^\star\o+\i\ddbar\vp)^{n+1} =0, \quad \hbox{\ \ on \ \ } D\times M,
\end{equation}
\begin{equation}
\label{HCMASecondEq}
(\pi^\star\o+\i\ddbar\vp)|_{\{t\}\times M} >0,\quad \forall\, t\in D,
\end{equation}
\begin{equation}
\label{HCMAThirdEq}
\vp  = \psi,\quad \hbox{\ \ on\ \ } \pa D\times M.
\end{equation}

\equationspace
One may show that this HCMA is the Euler-Lagrange equation of
an infinite-dimensional version
of a Wess-Zumino-Witten model, given by the  energy functional

$$
E^{WZW}_{\sigma} (b) = \frac{1}{2} \int_D |\nabla b|^2 + \int_Z \theta,
$$

\equationspace
on the  space of maps $b \in C^\infty(D, G^{\C}/G)$, 
where $\sigma : \partial D \to
G^{\C}/ G$ is a fixed map $\sigma$ \cite{D1}. The Lie bracket of
$G$ determines a $3$-form $\theta$ and $Z$ is any cochain with
boundary $b(D) - b_0(D)$ for some fixed
reference map $b_0$ with the same boundary conditions $\psi$.
The Euler-Lagrange equations for this
functional are the WZW equations

\begin{equation} 
d^\star d\, b + [b_\star {\textstyle \frac{\pa}{\pa q}}, b_\star {\textstyle \frac{\pa}{\pa s}}] = 0, 
\end{equation}

\equationspace
in Euclidean coordinates $q + \i s  \in D$, and where $d^\star$ maps 
sections of $T^\star D\otimes b^\star TG^{\C}/G$ to sections of $b^\star TG^{\C}/G$.
Finally, when $G$ and $G^{\C}/G$ are replaced by $\hbox{Ham}(M,\o)$ and $\hcal$,
the Christoffel symbols are given by
$\Gamma(\zeta,\eta)|_\vp=-{\textstyle\frac12}g_\vp(\nabla \zeta,\nabla \eta)$, and
the WZW equation is

$$
\vp_{qq}+\vp_{ss}-{\textstyle\frac12}|\nabla\vp_q|^2-{\textstyle\frac12}|\nabla\vp_s|^2+\{\vp_q,\vp_s\}_{\o_\vp}=0.
$$

\equationspace
It is a perturbation of the usual harmonic map equation by a
Poisson bracket term. Coming back to the toric situation and
restricting to the space of torus-invariant \K potentials
$\hcal(T) \subseteq\hcal$ the functions $\vp_q$ and $\vp_s$ are
commuting Hamiltonians and hence the WZW equation reduces to the
harmonic map equation.
The finite-dimensional WZW equation on $GL(d_k+1,\C)/U(d_k+1)$ may be written similarly

$$
T^{-1}T_{qq}+T^{-1}T_{ss}-(T^{-1}T_q)^2-(T^{-1}T_s)^2+\i[T^{-1}T_q,T^{-1}T_s]=0.
$$

\equationspace
The torus-invariance then corresponds to restriction to diagonal
matrices and again the last term vanishes and the equation reduces
to the harmonic map equation. Geometrically, the curvature of
$\hcal$ comes from the Poisson bracket and when we restrict to the
flat subspace $\hcal(T)$ the noncommutativity disappears.

In the case of the unit disc, the normal derivative of the Green kernel
is the Poisson kernel, whose restriction to $D\times\partial D$ takes the form
$$
P(re^{\i\theta},e^{\i\gamma})=P_r(\theta-\gamma)=-{1\over{2\pi}}{{1-r^2}\over{1-2r\cos(\theta-\gamma)+r^2}}
$$
(our convention is that the Green function be nonnegative, 
as explained in Section \ref{LAG}). Then we have the following 
more explicit statement of Theorem 1.1:

\equationspace
\begin{cor} \label{LATTICE}
Let $(M, L, \omega)$ be a polarized toric \kahler
manifold.
Let $\vp$ be a solution of the HCMA equation (\ref{HCMAFirstEq})-(\ref{HCMAThirdEq})
with $\psi:S^1\ra \hcal(T)$ a smooth map, and let $\vp_k:N\ra \bcal_{k}(T)$ be given 
on the open orbit by

$$
\vp_k(re^{\i\gamma},z)
=
\frac1k\log\sum_{\alpha\in kP\cap \Z^m}
|\chi_\alpha(z)|_{h_0^k}^2
\exp\Big(\int_{\partial D}P_r(\theta-\gamma)\log||\chi_\alpha||^2_{h^k_{\psi(\theta)}}d\theta\Big).
$$

\equationspace
Then $\lim_{k\rightarrow\infty}\vp_k=\vp$ in the $C^2(D\times M)$ topology. 
\end{cor}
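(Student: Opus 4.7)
The plan is to deduce Corollary \ref{LATTICE} from Theorem 1.1 applied to $N=D$ once two elementary points are verified: first, that the harmonic map into $\hcal(T)$ produced by Theorem 1.1 solves the HCMA system (\ref{HCMAFirstEq})--(\ref{HCMAThirdEq}); second, that on the disc the general formula (\ref{ApproximateHarmonicMapsEq}) specializes to the Poisson-integral expression in the statement.

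For the first point, recall from the discussion preceding the corollary that (\ref{HCMAFirstEq})--(\ref{HCMAThirdEq}) is the Euler--Lagrange equation of the WZW functional, which on a general map $\vp:D\to \hcal$ reads
$$
\vp_{qq}+\vp_{ss}-\tfrac12|\nabla\vp_q|^2-\tfrac12|\nabla\vp_s|^2+\{\vp_q,\vp_s\}_{\o_\vp}=0,
$$
differing from the harmonic map equation on $\hcal$ only by the Poisson bracket term. Restricted to $\hcal(T)$, the Hamiltonians $\vp_q$ and $\vp_s$ depend only on the moment-map coordinates and hence Poisson-commute, making this bracket vanish identically. Consequently the harmonic map $\vp:D\to\hcal(T)$ furnished by Theorem 1.1 with boundary values $\psi$ is precisely a smooth solution of (\ref{HCMAFirstEq})--(\ref{HCMAThirdEq}).

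For the second point, a standard computation shows that the outward normal derivative of the Dirichlet Green's function on $D$, in the nonnegative convention of Section \ref{LAG}, coincides with the Poisson kernel:
$$
\partial_{\nu_q}G(re^{\i\gamma},e^{\i\theta})=P_r(\theta-\gamma).
$$
Substituting this identity into (\ref{ApproximateHarmonicMapsEq}), and using that the induced measure on $\partial D$ is $d\theta$, recovers verbatim the expression for $\vp_k$ stated in the corollary. The boundary condition $\vp_k|_{S^1}=\FS_k\circ\Hilb_k(\psi)$ required by Theorem 1.1 reappears from the fact that $P_r$ concentrates on a Dirac mass at $\gamma$ as $r\uparrow 1$, and the desired convergence $\vp_k\to\vp$ in $C^2(D\times M)$ is precisely the convergence statement of Theorem 1.1 specialized to $N=D$.

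The only conceptually substantive step is the first one, but it relies entirely on facts already spelled out in the paragraphs preceding the statement---the Euler--Lagrange characterization of HCMA and the vanishing of the Poisson bracket on torus-invariant potentials---so the proof amounts to a direct computation of the Green's function on the disc together with an invocation of Theorem 1.1.
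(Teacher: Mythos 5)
Your proposal is correct and matches the paper's (implicit) argument: the corollary is presented there as a direct specialization of Theorem 1.1, using exactly the two facts you cite — that on $\hcal(T)$ the Poisson bracket term vanishes so the WZW/HCMA equation reduces to the harmonic map equation, and that $\partial_{\nu_q}G$ on the disc is the Poisson kernel $P_r$ in the paper's sign convention. No further comment is needed.
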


The proof of Theorem 1.1 builds upon the machinery developed by
Song-Zelditch for the study of geodesics in $\hcal(T)$. In the geodesic  case, i.e., $N = [0, 1]$,
the approximating 
Bergman \kahler potentials take the form

\begin{equation} 
\label{toricphik} 
\phi_k(t, z) = \frac{1}{k} \log
\sum_{\alpha \in k P \cap \Z^m} |\chi_{\alpha}(z)|^2_{h_0^k}
\;e^{-(1 - t) \log  ||\chi_{\alpha}||_{\Hilb_k(h_0)}^2 -t \log  ||\chi_{\alpha}||_{\Hilb_k(h_1)}^2}.
\end{equation}

\equationspace
We see that the straight line segment in the case $N =[0, 1]$ is replaced by the harmonic extension of the
boundary $L^2$ norming constants in the  general case. 
Aside from justifying the general formula,
we need to modify the estimates to apply to harmonic functions on $N$ rather than linear functions on
$[0, 1]$.  Using the  localization
lemma and the asymptotics of the peak values proved in \cite{SoZ2} (which we recall in 
Section \ref{BACKGROUND}),  the uniform convergence in $C^2$  reduces to a
verification of orders of amplitudes where the analysis is carried
out separately in the interior of the polytope and near its
boundary. Since many details are similar to the geodesic case, 
we concentrate here only on the novel features and the reader
of Section \ref{SectionProof} would benefit from some familiarity with \cite{SoZ2}.
The reader is also referred to \cite{R}, Chapter 3, where the results of the present article appear
with greater detail.

Let us make note of one more relation between the geodesic segment problem and the harmonic mapping problem.
In both cases a key aspect of the toric situation is that the Legendre transform linearizes the 
harmonic map equation. This was known previously for geodesics \cite{D1,G,S2} (see also \cite{SoZ2}
for a simple proof), but is observed for the first time
here for general harmonic maps. We refer the reader to Section \ref{LAG} 
where we also observe a generalization 
(\ref{LaplaceLegendreEq}) of a well known formula from convex analysis and 
show that the Eells-Sampson harmonic map flow is Legendre transformed to the usual heat flow. 
This fact is quite a remarkable property of the toric situation 
and does not hold for general variational problems.
It follows that one can explicitly solve the WZW and 
harmonic map equations in terms of the associated symplectic potentials. We make crucial use of this in proving 
the convergence of the Bergman harmonic maps, and it is the reason why we do not founder amid regularity problems
as in the general projective case. Our results give the first proof of convergence for higher dimensional
harmonic maps into spaces of \K metrics. 
We hope to discuss in a separate article convergence results for WZW maps on general projective
manifolds.

Y.A.R. would like to thank Gang Tian for suggesting this problem to him and for
his advice and warm encouragement, and Bo`az Klartag for enjoyable lectures and discussions 
on convex analysis. 
Both authors would like to thank the Technion, where this work
was begun, for its hospitality in Summer 2007.
This material is based upon work supported in part under a National Science 
Foundation Graduate Research Fellowship and grant DMS-0603850.

\section{\label{BACKGROUND} Background results}

We begin by  recalling  some basic facts regarding toric varieties
relevant to our setting. We refer the reader to \cite{Ca,STZ1,SoZ2}
for more details.

We will work with coordinates on the open dense orbit of the complex
torus given by $z=e^{\rho/2+\i\th}$, with $\rho,\th\in\R^m\times
(S^1)^m$. The real torus $T\isom (S^1)^m$ acts in a Hamiltonian
fashion with respect to $\o$. The image of its moment map $\mu$ is
a Delzant polytope $P\subset \R^m$. Let $x$ denote the Euclidean
coordinate on $P$. The polytope is given by $P=\{x\in\R^m\,:\,
l_r(x):=\langle x,v_r\rangle-\lambda_r\le0, r=1,\ldots,d\}$ where
$v_r$ is an outward pointing normal to the $r$-th
$(m-1)$-dimensional face of $P$ (also called a facet) and is a 
primitive element of the lattice $\Z^m$.

The toric monomials $\{\chi_\al(z):=z^\alpha\}_{\a\in kP\cap\Z^m}$
are an orthogonal basis of $H^0(M,L^k)$ with respect to any
element of $\bcal_{k}(T)$. Hence a toric inner product,
equivalently a point in $\bcal_{k}(T)$, is completely determined
by the $L^2$ norms (up to $k^n/V$), or  {\it norming constants},  of the toric
monomials---

$$
\qcal_{h^k}(\alpha):=||\chi_\alpha||^2_{h^k} = \int_{(\C^*)^m}
|z^{\alpha}|^2 e^{- k \phi} dV_h. 
$$ 

\equationspace
Unlike in the Introduction here we let $h=e^{-\vp}$ with $\vp$ a local \K potential
on the open orbit (that does not extend globally).
Define the normalized norms of the monomials

$$
\pcal_{h^k}(\alpha,z):=\frac{|\chi_\alpha(z)|^2_{h^k}}{||\chi_\alpha||^2_{h^k}},
$$

\equationspace
and their peak values

\begin{equation}
\label{PalphaEq}
\pcal_{h^k}(\alpha):=\frac{|\chi_\alpha(\mu_{h^k}^{-1}(\frac\alpha k))|^2_{h^k}}{||\chi_\alpha||^2_{h^k}}.
\end{equation}

\equationspace
In order to complete the proof of Theorem 1.1 we will need some of
the tools developed by Song-Zelditch, that we now recall. First, an
asymptotic expression for $\pcal_{h^k}(\a)$ for families of toric
Bergman metrics. This expression is sensitive to the distance of 
$\a/ k$ to the boundary of the polytope. Recall that the $k$-th Bargmann-Fock
model on $(\C, \i dz\w d\bar z)$ 
is given by the holomorphic functions that are $L^2$ with respect
to the Hermitian metric $h^k_{\BF}=e^{-k|z|^2}$ and a basis is given by all monomials
$z^\a$ with $\a\in\Z_+$. One may compute that
\begin{equation}
\label{BFEq}
\pcal_{h^k_{\BF}}(\a)=ke^{-\a}\frac{\a^\a}{\a!}.
\end{equation}

Let $\delta_k=\frac1{\sqrt{k}\log k}$. 
Denote by $\fcal_{\delta_k}(x)=\{r\,:\, l_r(x)<\delta_k\}$ the index
set for those facets to which $x$ is $\frac1{\sqrt{k}\log k}$-close,
and let $\delta^\sharp_k(x)$ denote the cardinality of this set. 
Set 
$$
\gcal_\vp(x):=\left(\delta_\vp(x)\cdot\Pi_{j\not\in\fcal_{\delta_k(x)}}l_j(x) \right)^{-1},
$$
where $\delta_\vp(x)$ is defined in (\ref{HessianSymplecticPotentialEq}) below and put
$$
\pcal_{\BF,\delta_k}(\a):=\Pi_{j\in\fcal_{\delta_k(x)}}\pcal_{h^k_{\BF}}(\a_j).
$$

\noindent
These two terms are the far and near contributions to the asymptotics of the peak values $\pcal_{h^k}(\a)$:

\begin{lem} {\rm(See \cite{SoZ2}, Propositions 6.1, 6.5.)}
\label{PLemma} Let $\delta_k=\frac1{\sqrt{k}\log k}$.
Let $\{h_t\}_{t\in K}$ be a family of metrics with
$K$ compact. Then there exist $C>0$ independent of $t$ such that
for any $\delta\in(0,\frac12)$

\begin{equation}\label{PEq}
\pcal_{h_t^k}(\a)=Ck^{\frac12(m-\delta^\sharp_k({\textstyle\frac\a k}))}
\sqrt{\gcal_\vp({\textstyle\frac\a k})}\pcal_{BF,\delta_k}({\textstyle\frac\a k})(1+R_k({\textstyle\frac\a k}, h_t)),
\end{equation}

\equationspace 
where $R_k=O(k^{\delta-\frac12})$. This expansion is uniform in
$t$ and may be differentiated twice to give for $j=1,2$ and for
some amplitudes $S_j$ of order zero the expansion

\begin{equation}
\label{PDerivativesEq}
\Big(\frac{\pa}{\pa t}\Big)^j
\pcal_{h_t^k}(\a)=C_mk^{\frac12(m-\delta^\sharp_k({\textstyle\frac\a k}))}
\sqrt{\gcal_\vp({\textstyle\frac\a k})}\pcal_{BF,\delta_k}
({\textstyle\frac\a k})(S_j(t,\a,k)+R_k({\textstyle\frac\a k}, h_t)).
\end{equation}
\end{lem}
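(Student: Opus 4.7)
The plan is to express $\pcal_{h_t^k}(\alpha)$ as the reciprocal of a Laplace-type integral on the open orbit, and then to evaluate that integral by patching a standard stationary phase expansion in directions far from $\partial P$ with a Bargmann--Fock model expansion in directions whose facets are within $\delta_k$ of $\alpha/k$.

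First, in logarithmic coordinates $z_j=e^{\rho_j/2+i\theta_j}$, torus-invariance together with the product structure of the toric volume form gives
\begin{equation*}
||\chi_\alpha||^2_{h_t^k}=c_m\int_{\R^m}e^{-k(\vp_t(\rho)-\langle\alpha/k,\rho\rangle)}\det\Hess\vp_t(\rho)\,d\rho.
\end{equation*}
The phase has a unique critical point $\rho^*_t=\mu_{h_t}^{-1}(\alpha/k)$ with critical value $-u_t(\alpha/k)$, where $u_t$ is the Legendre dual of $\vp_t$, and the numerator $|\chi_\alpha(\rho^*_t)|^2_{h_t^k}$ equals $e^{ku_t(\alpha/k)}$ exactly. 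The factor $e^{ku_t(\alpha/k)}$ therefore cancels between numerator and denominator, leaving
\begin{equation*}
\pcal_{h_t^k}(\alpha)=\Big(c_m\int_{\R^m}e^{-k\Psi_{\alpha,t}(\rho)}\det\Hess\vp_t(\rho)\,d\rho\Big)^{-1},
\end{equation*}
with $\Psi_{\alpha,t}(\rho):=\vp_t(\rho)-\langle\alpha/k,\rho\rangle+u_t(\alpha/k)\ge0$ vanishing quadratically at $\rho^*_t$.

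Next, I split the integral into the $m$ coordinate directions according to whether $l_j(\alpha/k)\ge\delta_k$ or not. Using the Guillemin normal form $u_t=\sum_r l_r\log l_r+(\text{smooth})$ together with the identity $\det\Hess\vp_t(\rho^*_t)=\delta_{\vp_t}(\alpha/k)\cdot\prod_r l_r(\alpha/k)$, in which $\delta_{\vp_t}$ is smooth and positive on $\bar P$, each far direction $j\notin\fcal_{\delta_k}$ contributes by one-variable Laplace asymptotics a factor of order $\sqrt{2\pi l_j(\alpha/k)/k}$ to the integral. For each near direction $j\in\fcal_{\delta_k}$ the Gaussian approximation breaks down (the critical point escapes to $-\infty$ in $\rho_j$), and after rescaling into the Guillemin normal coordinate the local integrand coincides, up to a relative error $O(k^{\delta-1/2})$, with that of the one-dimensional Bargmann--Fock model; this is where the factor $\pcal_{h^k_{\BF}}(\alpha_j)^{-1}$ enters the denominator. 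Multiplying out and inverting, the resulting expression is exactly (\ref{PEq}), with $\sqrt{\gcal_{\vp_t}(\alpha/k)}$ absorbing the product of $\delta_{\vp_t}^{-1/2}$ with the far-direction factors $l_j^{-1/2}$, and $k^{(m-\delta^\sharp_k)/2}$ counting one $k^{1/2}$ per far direction.

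The main obstacle, and the source of the specific choice $\delta_k=1/(\sqrt k\log k)$, is uniform matching between the two regimes: $\delta_k$ must be much larger than the natural Gaussian width $k^{-1/2}$ so that far directions admit a Gaussian approximation with a controlled tail, yet small enough that near directions still carry useful Bargmann--Fock asymptotics; the freedom to use any $\delta\in(0,\tfrac12)$ in $R_k$ records the remaining slack. Uniformity in $t\in K$ is then inherited from compactness of $K$ and the joint continuity of $\vp_t$, $u_t$, $\mu_{h_t}^{-1}$, and $\delta_{\vp_t}$ in $t$. Finally, (\ref{PDerivativesEq}) follows by differentiating the integral representation once and twice in $t$: since $\partial_t\vp_t$ and $\partial_t^2\vp_t$ are uniformly bounded on $M$, each derivative either perturbs the phase by an order-zero factor---absorbed into the amplitude $S_j(t,\alpha,k)$---or acts on the smooth Jacobian, neither of which alters the power of $k$ or the facet split.
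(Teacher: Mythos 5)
This lemma is not proved in the paper itself; it is imported verbatim from \cite{SoZ2}, Propositions 6.1 and 6.5, and your argument---writing $\pcal_{h_t^k}(\a)$ as the reciprocal of a Laplace integral with normalized nonnegative phase $\Psi_{\a,t}$, running Gaussian asymptotics in the directions whose facets lie farther than $\delta_k$ from $\a/k$, and matching the remaining directions to the one-dimensional Bargmann--Fock model in Guillemin normal coordinates---is essentially the proof given there, with the correct bookkeeping of the powers of $k$ and of $\sqrt{\gcal_\vp}$ via Abreu's formula. The one place your sketch is thinner than the source is the differentiated expansion (\ref{PDerivativesEq}): the essential point, which you state only implicitly, is that $\pa_t\Psi_{\a,t}$ vanishes at the critical point (the $k\,\pa_t u_t(\a/k)$ coming from the peak value cancels the leading term $-k\,\pa_t\vp_t(\rho^*_t)$ produced by differentiating the norming constant), and it is this cancellation---which must also be verified uniformly in the near-facet regime---that makes $S_j$ an amplitude of order zero rather than order $j$.
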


Next, recall the following asymptotic localization of sums result:

\begin{lem} {\rm(See \cite{SoZ2}, Lemma 1.2.)}
\label{LocalizationLemma} Let $B_k(y,\alpha):kP\cap
\Z^m\rightarrow\C$ be a family of lattice point functions
satisfying $|B_k(y,\alpha)|\le C_0k^M$ for some $C_0,M\ge0$. Fix 
$\delta\in(0,1/2)$. Then there exists $C>0$ such that

$$
\sum_{kP\cap \Z^m} B_k(y,\alpha)\pcal_{h_y^k}(\a,z) = \sum_{\a:
|\frac\a k-\mu_y(z)|\le k^{\delta-\frac12}}
B_k(y,\alpha)\pcal_{h_y^k}(\a,z) +O(k^{-C}).
$$

\end{lem}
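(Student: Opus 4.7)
The plan is to reduce the lemma to a uniform super-polynomial decay estimate on $\pcal_{h_y^k}(\alpha,z)$ as a function of $\alpha\in kP\cap\Z^m$, valid off the concentration window $|\alpha/k-\mu_y(z)|\le k^{\delta-\frac12}$ and uniform in $y$ on compact sets and $z\in M$. Once such decay is in hand, the lemma follows immediately: the discarded tail of the sum is majorized by
\begin{equation*}
C_0 k^M\cdot\#(kP\cap\Z^m)\cdot\sup_{|\alpha/k-\mu_y(z)|>k^{\delta-\frac12}}\pcal_{h_y^k}(\alpha,z),
\end{equation*}
and since $\#(kP\cap\Z^m)=O(k^m)$, a supremum bound of order $\exp(-ck^{2\delta})$ beats all polynomial factors and delivers $O(k^{-C})$ for any preassigned $C>0$.

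To obtain the concentration, I would work on the open orbit in log-coordinates $z=e^{\rho/2+\i\th}$ with local toric potential $\phi_y$, writing
\begin{equation*}
\pcal_{h_y^k}(\alpha,z)=\frac{e^{\alpha\cdot\rho(z)-k\phi_y(z)}}{\int_{\R^m}e^{\alpha\cdot\rho'-k\phi_y(\rho')}\,d\rho'}
\end{equation*}
up to an inessential normalization constant. The logarithmic $\alpha$-derivative of this ratio is $\rho(z)-\langle\rho'\rangle_\alpha$, with Laplace point $\rho^\star_\alpha=\mu_{h^k_y}^{-1}(\alpha/k)$, so the peak in $\alpha$ occurs exactly at $\alpha_0=k\mu_y(z)$. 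A quadratic expansion of the log-density around $\alpha_0$, with Hessian $-(k\Hess\phi_y(\rho(z)))^{-1}$ supplied by Laplace's method, then yields the interior Gaussian estimate
\begin{equation*}
\pcal_{h_y^k}(\alpha,z)\le C\,k^{m/2}\exp\bigl(-c\,k\,|\alpha/k-\mu_y(z)|^2\bigr),
\end{equation*}
with $C,c>0$ uniform in $y$ ranging in a compact parameter set, provided $\mu_y(z)$ stays a fixed distance from $\partial P$. On the tail this is at most $Ck^{m/2}\exp(-ck^{2\delta})$. In the boundary regime, where $\alpha/k$ lies within $\delta_k=1/(\sqrt{k}\log k)$ of one or more facets, I would invoke the factorization (\ref{PEq}) of Lemma \ref{PLemma} to replace the near-facet coordinates by the Bargmann--Fock peak values $\pcal_{h^k_{\BF}}(\alpha_j)=ke^{-\alpha_j}\alpha_j^{\alpha_j}/\alpha_j!$. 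Stirling's formula applied to these produces the analogous Gaussian-type decay in the normal direction to each facet.

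The main obstacle is uniformity across the transition between the interior and boundary regimes: as $\mu_y(z)\to\partial P$, $\Hess\phi_y$ becomes singular and the bare Laplace argument degenerates, while the Bargmann--Fock comparison is organized around the facet-adapted scale $\delta_k$. Lemma \ref{PLemma} precisely packages these matching asymptotics uniformly, and its error $R_k=O(k^{\delta-\frac12})$ is absorbed harmlessly in the summation estimate. Once the uniformity of the Gaussian profile is secured across the polytope and across $y$ in a compact set, the super-polynomial tail bound follows, and with it the lemma.
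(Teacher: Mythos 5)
The paper itself gives no proof of this lemma; it is quoted verbatim from \cite{SoZ2} (Lemma 1.2 there), and your strategy --- superpolynomial pointwise decay of $\pcal_{h_y^k}(\a,z)$ off the window $|\a/k-\mu_y(z)|\le k^{\delta-\frac12}$, combined with the polynomial count $\#(kP\cap\Z^m)=O(k^m)$ and the polynomial bound on $B_k$ --- is exactly the mechanism of the cited proof. So the reduction in your first paragraph is correct and complete as stated.

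The one step that would fail if taken literally is the derivation of the Gaussian bound by ``a quadratic expansion of the log-density around $\alpha_0=k\mu_y(z)$.'' A Taylor expansion controls only a neighborhood of $\alpha_0$, whereas the discarded lattice points range over all of $kP$, including points at unit distance from $\mu_y(z)$. The correct global statement comes from Legendre duality rather than a local expansion: writing $\log\pcal_{h^k_y}(\a,z)=\log\pcal_{h^k_y}(\a)+k\big[\langle \a/k,\rho\rangle-\vp_y(\rho)-u_y(\a/k)\big]$ via (\ref{DualityEq}), the bracketed function of $x=\a/k$ is globally concave on $P$ with Hessian $-\nabla^2u_y\le -cI$ (uniformly, since $\nabla^2 u_0=\sum_r v_rv_r^T/l_r$ is bounded below on the compact polytope and only improves near $\partial P$), is nonpositive, and vanishes exactly at $x=\mu_y(z)$; hence it is $\le -\tfrac{c}{2}|x-\mu_y(z)|^2$ on all of $P$, which gives your $\exp(-ck^{2\delta})$ tail with a polynomial prefactor (the peak values $\pcal_{h^k_y}(\a)$ are $O(k^m)$ by Lemma \ref{PLemma}, which suffices). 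This also corrects a minor misdiagnosis in your last paragraph: as $\mu_y(z)\to\partial P$ it is $\Hess\vp_y$ that degenerates, so $\nabla^2u_y=(\Hess\vp_y)^{-1}$ blows up and the Gaussian decay only strengthens; the genuine boundary delicacy lies in the uniform Laplace evaluation of the norming constants $\qcal_{h^k}(\a)$, which is precisely what the Bargmann--Fock factorization of Lemma \ref{PLemma} packages. With the local expansion replaced by the global concavity inequality, your argument is sound.
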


\section{\label{LAG} Legendre transform to harmonic functions and Legendre duality of geometric flows}

Over the open orbit of $M$, a toric  \kahler potential may be
identified with a convex function on $\R^m$, $\vp=\vp(\rho)$ in the logarithmic
coordinates of \S \ref{BACKGROUND}. 
By abuse of notation we will frequently identify $\hcal(T)$ with the local
torus-invariant \K potentials defined on the open orbit.
The gradient of $\vp(\rho)=\vp(e^\rho)$ is a one-to-one map, identified with the moment map $\mu$,
whose image is $P$. 
The Legendre transform
appeared in the context of symplectic toric manifolds in the work of Guillemin \cite{Gu} 
and this tool lies at the heart of this section 
(some of our work in this section is also related to work of Semmes \cite{S1}).
It takes
\K potentials $\vp$ on the open orbit to symplectic potentials
$\lcal \vp=u_\vp$, that are defined as convex functions on $P$
with logarithmic singularities on $\partial P$,
and relates the moment map,
local symplectic potential $u_\vp(x)=u_\vp(\mu(\rho))$ and local \K potential
as follows:

\begin{equation}
\label{LegendreTransformEq}
\begin{array}{lll}
u_\vp(x) & =\langle x,2\log\mu^{-1}(x)\rangle-\vp(\mu^{-1}(x))\cr\cr
& = \langle x,(\nabla\vp)^{-1}(x)\rangle-\vp\big((\nabla\vp)^{-1}(x)\big).
\end{array}
\end{equation}

\equationspace
In addition
\begin{equation}
\label{LegendreGradientRelationEq}
(\nabla\vp)^{-1}(x)=\nabla u(x),
\end{equation}
and
\begin{equation}
\label{LegendreHessianRelationEq}
(\nabla^2\vp)^{-1}|_{(\nabla\vp)^{-1}(x)}=\nabla^2 u|_x.
\end{equation}

Any symplectic potential $u$ can be written as $u_0+f$, with respect to the canonical
potential

\begin{equation}
\label{SymplecticPotentialEq}
u_0(x)=\sum_{k=1}^dl_k(x)\log l_k(x)
\end{equation}

\equationspace
introduced by Guillemin, with $f$ smooth up to the boundary \cite{Gu}. 
Just as for \K potentials we may define the space of global symplectic potentials:

\begin{equation}
\lcal\hcal(T)=\{\,f\in C^\infty(P)\,:\, u_0+f=\lcal\vp \hbox{\ \ with\ \ } \vp\in\hcal(T)\}.
\end{equation}

\equationspace
We will sometimes, by abuse of notation, identify elements of this space with their local
symplectic potentials in the same manner as with $\hcal(T)$ itself.

Letting $G_\vp(x)=\nabla^2u_\vp(x)$ we have the following formula of Abreu

\begin{equation}
\label{HessianSymplecticPotentialEq}
\det G_\vp^{-1}=\delta_\vp(x)\cdot\Pi_{r=1}^dl_r(x),
\end{equation}

\equationspace
for some positive smooth function $\delta_\vp$ \cite{A}.

Let $n=\dim_\R N$ and denote by $y_1,\ldots,y_n$ local coordinates over some
coordinate patch $U\subset N$.
We assume that $N$ is oriented as a manifold with boundary, i.e., that the orientation
on $\partial N$ is the one induced from $N$.
Recall that there always exists a Dirichlet Green function
$G(y,q)\in C^\infty(N\times N\setminus\diag(N))$ for the Laplacian 
$\Delta_N:=\Delta_{N,f}$ 
on such  a manifold \cite{Au,GT}. If $v\in C^\infty(\partial 
N)$, the equations
$$
\Delta_N u=0, \hbox{\ on \ } N,
$$
$$
u=v, \hbox{\ on \ } \partial N,
$$
have a unique smooth solution 
\begin{equation}
\label{DirichletHarmonicKernelSolEq}
u(\,\cdot\,)=-\int_{\partial N}v(q)\partial_{\nu(q)} G(\,\cdot\,,q)dV_{\partial N,f}(q)
\end{equation}
(our convention will be that $G(y,q)$ is positive in the interior and vanishes when $q$ is in the boundary),
with $\nu(q)=\nu^i(q)\frac{\pa}{\pa y^i}|_q$ an outward unit normal to $\partial N$ in $N$ (coming from
a Riemannian splitting $TN|_{\partial N}=T\partial N\oplus N_{\partial N}$, where $N_{\partial N}$ is
the normal bundle to $\partial N$ in $N$), and where we let $\partial_{\nu(q)} G(y,q):=\nu(q) G(y,q)\le 0$ be
the normal derivative with respect to the second argument.

Let $\Gamma_{ab}^c$ denote the Christoffel symbols of $(N,f)$ with respect
to local coordinates $y^1,\ldots,y^n$. Recall the following expression for
the Christoffel symbols of $(\hcal,g_{L^2})$. Our proof is a slight variation
on those in \cite{CCh, D1}. 

\begin{lem}
For every $e,f\in T_\vp\hcal$ we have 
$\Gamma(e,f)|_\vp=-{\text\frac12}g_\vp(\nabla e,\nabla f)$.
\end{lem}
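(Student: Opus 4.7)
The approach is the classical Koszul-formula computation, exploiting the fact that $\hcal$ is an open subset of the affine space $C^\infty(M)$. Given $e,f\in T_\vp\hcal\cong C^\infty(M)$ and an arbitrary test direction $g\in T_\vp\hcal$, I would extend all three to \emph{constant} vector fields $E,F,G$ on $\hcal$ (so that $E_\psi=e$, $F_\psi=f$, $G_\psi=g$ for every $\psi$). Since constant vector fields on an affine space commute pairwise, the Koszul formula for a torsion-free metric connection collapses to
\begin{equation*}
2\,g_{L^2}(\nabla_E F,G)\big|_\vp = E\bigl(g_{L^2}(F,G)\bigr)\big|_\vp + F\bigl(g_{L^2}(E,G)\bigr)\big|_\vp - G\bigl(g_{L^2}(E,F)\bigr)\big|_\vp.
\end{equation*}

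Each directional derivative is then evaluated by differentiating the defining integral \eqref{metric} in $t$. For the first term,
\begin{equation*}
E\bigl(g_{L^2}(F,G)\bigr)\big|_\vp = \frac{d}{dt}\bigg|_{t=0}\frac{1}{V}\int_M fg\,\omega_{\vp+te}^m = \frac{m}{V}\int_M fg\,\tfrac{\i}{2\pi}\ddbar e\wedge\omega_\vp^{m-1} = \frac{1}{2V}\int_M fg\,(\Delta_\vp e)\,\omega_\vp^m,
\end{equation*}
where the last step uses the K\"ahler identity $m\,\tfrac{\i}{2\pi}\ddbar e\wedge\omega_\vp^{m-1}=\tfrac12(\Delta_\vp e)\,\omega_\vp^m$ with $\Delta_\vp$ the Riemannian Laplacian of $g_\vp$. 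Integration by parts on the closed manifold $M$ converts this into $-\tfrac{1}{2V}\int_M g_\vp\bigl(\nabla(fg),\nabla e\bigr)\,\omega_\vp^m$, and analogously for the other two derivatives.

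Inserting these into the Koszul formula and expanding $\nabla(fg)=f\nabla g+g\nabla f$ (and the corresponding expansions in the other two), the six resulting cross terms reorganize: four of them cancel in pairs, using the symmetry of the inner product $g_\vp$ together with the sign flip in the $-G(\cdots)$ contribution, while the remaining two combine to yield $-\tfrac{1}{V}\int_M g\cdot g_\vp(\nabla e,\nabla f)\,\omega_\vp^m=-g_{L^2}\bigl(g_\vp(\nabla e,\nabla f),\,g\bigr)\big|_\vp$. Dividing by the factor $2$ on the left-hand side and invoking the non-degeneracy of $g_{L^2}$, with $g$ ranging over $T_\vp\hcal$, delivers the claim $\Gamma(e,f)|_\vp=-\tfrac12 g_\vp(\nabla e,\nabla f)$. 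The proof is essentially mechanical; the only delicate point is the bookkeeping of the factor $\tfrac12$ from the K\"ahler identity together with the combinatorics of the cross-term cancellation, which is why the authors describe this as a slight variation on the arguments already in \cite{CCh,D1}.
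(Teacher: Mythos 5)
Your proposal is correct and follows essentially the same route as the paper: constant extensions of the tangent vectors, the collapsed Koszul formula, differentiation of the $L^2$ metric producing a Laplacian term, and integration by parts with cancellation of cross terms. The only (minor) point the paper adds that you elide is the verification that the resulting formula actually defines a torsion-free, metric-compatible connection — in infinite dimensions the Koszul formula a priori only gives uniqueness, and $g_{L^2}$ is only weakly non-degenerate, so existence is checked separately.
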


\begin{proof}
Recall that the proof of  Koszul formula for the Levi-Civita connection of a finite-dimensional
manifold \cite{P}, page 122,  carries over to infinite-dimensions to show that if a Levi-Civita
connection exists it is unique.
Regard the functions $c,e,f$ as constant vector fields on $\hcal$
and let $D$ denote the Levi-Civita connection of $g_{L^2}$. 
 Therefore the corresponding
brackets vanish and we have

\begin{equation}
\label{KoszulEq}
2g_{L^2}(D_ce,f)|_\vp=c\, g_{L^2}(e,f)-f\,g_{L^2}(e,c)+e\, g_{L^2}(f,c).
\end{equation}
Since
$$
c\, g_{L^2}(e,f)=\frac{d}{dt}\Big|_0\frac1V\int_M ef(\o+t\i\ddbar c)^n=\frac1V\int_M ef\Delta_{\vp}c\,\o^n,
$$
we have
\begin{equation}
\begin{array}{lll}
2g_{L^2}(D_ce,f)|_\vp
& =
\displaystyle\frac1V\int_M (ef\Delta_\vp c-ce\Delta_\vp f + fc\Delta_\vp e)\, \o^n
\cr\cr
& = 
\displaystyle\frac1V\int_M (ef\Delta_\vp c-f\Delta_\vp(ce)+ e\Delta_\vp(fc) )\, \o^n.
\cr\cr
& =
\displaystyle-\frac1V\int_M g_\vp(\nabla c,\nabla e) f\, \o^n.
\end{array}
\end{equation}
It follows that

$$
D_c e = -{\text\frac12} g_{\vp}(\nabla c,\nabla e).
$$

\equationspace
Finally, this expression is symmetric hence $D$ is torsion-free, and it is also compatible with
$g_{L^2}$ since $f\, g_{L^2}(c,e)=g_{L^2}(D_f c,e)+g(c,D_f e)$
is just
$$
\frac1V\int_M ec\Delta_{\o_\vp} f\o^n
=
\frac1V\int_M ec{\text\frac12}\Delta_{g_\vp} f\o^n
=
-\frac1V\int_M{\text\frac12} (e\nabla f\cdot\nabla c+c\nabla f\cdot\nabla e)\o^n.
$$
\end{proof}

Several authors observed previously that
the Legendre transform linearizes the geodesic equation, so
that a geodesic $\vp_t$ with endpoints $\vp_0, \vp_1$ is given by 
$\phi_t =\calL^{-1}( \calL \vp_0 + t (\calL\vp_1 -\calL\vp_0))$ \cite{D1,G,S2}.
We now observe that under the Legendre transform, a harmonic map into $\H(T)$ is
mapped to a family of symplectic potentials that are harmonic
functions in the $N$ variables.
The key point is that the Legendre transform eliminates the Christoffel symbols in a variational sense.

\begin{prop}
\label{LegendreTransformProp}
Let $\psi:\pa N\ra \H(T)$ be a smooth map. There exists a unique harmonic
map $\vp$ from $N$ to $\H(T)$ that agrees with $\psi$ on $\pa N$. Moreover,
$\vp=\lcal^{-1}u$ where $u\in C^\infty(N\times P\setminus\pa P)$
satisfies $\Delta_N u=0$ and $u|_{\partial N}=\lcal\psi$.
\end{prop}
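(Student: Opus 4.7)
The plan is to push the problem through the Legendre transform and reduce it to the standard scalar Dirichlet problem on $N$. Given a smooth $\vp: N \to \hcal(T)$ with symplectic transform $u(y, x) := \lcal \vp(y)(x)$, differentiating the defining identity (\ref{LegendreTransformEq}) in $y^a$ at fixed $x$, combined with (\ref{LegendreGradientRelationEq}), yields the key chain rule
$$
\partial_{y^a} u(y, x) \;=\; -\partial_{y^a}\vp(y, \rho)\big|_{\rho = \nabla_x u(y, x)}.
$$
Combined with the standard toric identity that the moment map pushes $\omega_\vp^m / m!$ forward to Lebesgue measure $dx$ on $P$, this rewrites the energy of $\vp$ as
$$
E(\vp) \;=\; \frac{1}{2V}\int_N f^{ab}\int_M \partial_a\vp\,\partial_b\vp\,\omega_\vp^m\, dV_{N,f} \;=\; \frac{m!}{2V}\int_N \int_P f^{ab}\,\partial_a u\,\partial_b u\, dx\, dV_{N,f},
$$
which is precisely the Dirichlet energy of $u : N \to \lcal\hcal(T)$, the latter regarded as a convex subset of the flat affine space $u_0 + C^\infty(\overline{P})$. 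Its Euler-Lagrange equation is therefore the scalar Laplace equation $\Delta_N u(\cdot, x) = 0$ for each fixed $x \in P$, whose unique solution with boundary data $\lcal\psi$ is given pointwise in $x$ by (\ref{DirichletHarmonicKernelSolEq}):
$$
u(y, x) \;=\; -\int_{\partial N} \partial_{\nu(q)} G(y, q)\, \lcal\psi(q)(x)\, dV_{\partial N, f}(q).
$$

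The main obstacle is to verify that this $u(y, \cdot)$ lies in $\lcal\hcal(T)$ for every $y \in N$, so that $\vp := \lcal^{-1} u$ is well defined in $\hcal(T)$. Joint smoothness on $N \times (P \setminus \partial P)$ follows from smoothness of $\lcal \psi$ in $(q, x)$ and standard Green kernel regularity. For strict convexity in $x$ at fixed $y$, note that $-\partial_{\nu(q)} G(y, q)\, dV_{\partial N, f}(q)$ is a positive Borel measure on $\partial N$ of total mass one, so $\nabla_x^2 u(y, x)$ is a convex average of the Hessians $\nabla_x^2 \lcal\psi(q)(x)$; since these admit a uniform positive-definite lower bound as $q$ ranges over the compact set $\partial N$, the average is positive definite. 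For the behavior near $\partial P$, writing each $\lcal\psi(q) = u_0 + f_q$ as in (\ref{SymplecticPotentialEq}) with $f_q \in C^\infty(\overline{P})$ depending smoothly on $q$, the Poisson average of the $q$-independent function $u_0$ is $u_0$ itself, so $u(y, \cdot) = u_0 + \tilde{f}(y, \cdot)$ with $\tilde{f} \in C^\infty(N \times \overline{P})$. Hence $u(y, \cdot) \in \lcal\hcal(T)$.

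Setting $\vp := \lcal^{-1} u$ then produces a smooth map $N \to \hcal(T)$ extending $\psi$. The energy identity above, together with strict convexity of the Dirichlet energy in $u$ and bijectivity of $\lcal$, shows that $\vp$ is the unique critical point of $E$, i.e., the unique harmonic map extending $\psi$. One can verify this directly: by the preceding lemma, the harmonic map equation for a map into $\hcal(T)$ reads
$$
\Delta_N \vp \;-\; \half\, f^{ab}\, g_\vp(\nabla \partial_a \vp,\, \nabla\partial_b\vp) \;=\; 0,
$$
and a computation along the lines of the chain rule above shows that the Christoffel correction is precisely what is needed to convert the $\omega_\vp^m$-measure into Lebesgue measure on $P$, so that the equation pulls back under $\lcal$ to $\Delta_N u = 0$.
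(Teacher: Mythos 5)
Your proof is correct and follows essentially the same route as the paper: linearize via the Legendre transform, identify the energy with the flat Dirichlet energy of $u$ using $\partial_{y^a}u=-\partial_{y^a}\vp$ and the pushforward $(\nabla\vp)_\star\omega_\vp^m=dx$, solve $\Delta_N u=0$ by the Green kernel, and verify convexity of $u(y,\cdot)$ from the nonnegativity of $-\partial_{\nu(q)}G(y,q)$. Your additional observations (the decomposition $u(y,\cdot)=u_0+\tilde f(y,\cdot)$ with $\tilde f$ smooth up to $\partial P$, and the pointwise matching of Euler--Lagrange equations) are correct refinements of details the paper leaves implicit or defers to its Theorem \ref{LegendreFlowThm}.
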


\begin{proof}
The proof of the one-dimensional case (\cite{SoZ2}, Proposition 2.1) 
carries over with minor changes. Indeed, harmonic maps into
$\hcal(T)$ are stationary points of the functional,

\begin{equation}
\label{HarmonicMapEnergyEq}
E(\vp)=\int_{N}|d\vp|^2 dV_{N,f}=\int_{N\times M} f^{ab}\frac{\pa\vp}{\pa
y^a}\frac{\pa\vp}{\pa y^b}\o_\vp^n\wedge dV_{N,f}.
\end{equation}

\equationspace
First considering a variation of (\ref{LegendreTransformEq}) at 
$\rho=\rho(x)=(\nabla \vp)^{-1}(x)$ yields

$$
\frac{\pa u}{\pa y^a}\Big|_x
=
\frac{du}{dy^a}\Big|_x
=
\sum_{j=1}^n x_j\frac{\pa \rho_j}{\pa y^a}
-\frac{\pa \vp}{\pa y^a}\Big|_{\rho}
-\sum_{j=1}^n\frac{\pa \vp}{\pa \rho_j}\frac{\pa \rho_j}{\pa y^a}
=-\frac{\pa \vp}{\pa y^a}\Big|_{\rho},
$$
since $\nabla \vp(\rho)=x$.

Next, torus invariance allows us to integrate instead over the polytope and we have using
(\ref{LegendreGradientRelationEq}) and (\ref{LegendreHessianRelationEq}) that
\begin{equation}
\begin{array}{lll}
(\nabla\vp)_\star (\o_\vp^n) & 
=
(\nabla\vp)_\star \big((\det\nabla^2\vp)d\rho_1\w d\th_1\w\cdots \w d\rho_m\w d\th_m \big)
\cr\cr
& =
(\nabla u)^{-1}_\star \big((\det\nabla^2\vp)d\rho_1\w d\th_1\w\cdots \w d\rho_m\w d\th_m\big)
\cr\cr
& =
(\nabla u)^\star \big((\det\nabla^2\vp)d\rho_1\w d\th_1\w\cdots \w d\rho_m\w d\th_m\big)
\cr\cr
& =
(\det\nabla^2\vp)(\det\nabla^2 u)dx^1\w\cdots\w dx^n=dx.
\end{array}
\end{equation}

\noindent
This means that the metric $g_{L^2}$ is pushed-forward to the `Euclidean' metric on $\calL\H(T)$.
Therefore the functional $E(\vp)$ equals

\begin{equation}
\label{HarmonicEnergyEq}
\int_{N\times P} f^{ab}\frac{\pa u}{\pa y^a}\frac{\pa u}{\pa
y^b}dx\w dV_{N,f} =\int_{N}|d u|^2 dV_{N,f},
\end{equation}
which is equal to the energy of the map $u:N\ra\calL\H(T)$.
Since the target space is now flat with vanishing Christoffel symbols 
the Euler-Lagrange equation is $\Delta_N u=0$.

Finally, note that since $u|_{\pa N}=\lcal\psi|_{\pa N}$ 
is convex on the boundary it is also convex in the interior of $N$: Observe that
from (\ref{DirichletHarmonicKernelSolEq}) it follows that the Hessian of $u$ (in the $P$ variables, namely $x$) 
for every $y\in N$ is given
by
$$
\nabla^2 u(y,x)=-\int_{\partial N}\nabla^2 u(q,x)\partial_{\nu(q)} G(y,q)dV_{\partial N,f}(q),
$$
and since $-\partial_{\nu(q)}G(y,q)\ge0$  (see the paragraph after (\ref{DirichletHarmonicKernelSolEq})) 
it follows that $\nabla^2 u(y,x)$ is therefore a positive-definite matrix. Therefore 
$\vp:=\lcal^{-1}u$ is in $\hcal(T)$ and solves the harmonic map equation with boundary
values $\psi$, as required.
\end{proof}

The Eells-Sampson harmonic map heat flow \cite{ES} on the space of smooth maps from $(N,f)$ to 
$(\hcal(T),g_{L^2})$ is given by

\begin{equation}
\label{EellsSampsonFlowEq}
\pa_t \vp=
f^{ab}\pa_{y^a}\pa_{y^b}\vp-f^{ab}\Gamma_{ab}^c\pa_{y^c}\vp
-{\textstyle\frac12}f^{ab}g(\nabla \pa_{y^a}\vp,\nabla\pa_{y^b}\vp),
\end{equation}

\equationspace
while the heat flow on the space of symplectic potentials $\lcal\hcal(T)$ is 
given by

\begin{equation}
\label{HeatFlowEq}
\pa_t u=\Delta_N u.
\end{equation}

\equationspace
Note that the equations (\ref{EellsSampsonFlowEq}) and (\ref{HeatFlowEq}) hold without
change for the global \K and symplectic potentials, respectively.   

We record the following result
although we will not make use of it for the proof of the main theorem.

\begin{theo}
\label{LegendreFlowThm}
Under the Legendre tranform the Eells-Sampson harmonic map flow (\ref{EellsSampsonFlowEq}) 
on the space of \K potentials $\hcal(T)$ is mapped to the heat flow (\ref{HeatFlowEq}) on 
the space of symplectic potentials $\lcal\hcal(T)$. 
\end{theo}

\begin{proof}
As above, taking a variation of (\ref{LegendreTransformEq}) yields 

\begin{equation}
\label{FirstVariationLegendreEq}
\frac{\pa u}{\pa t}\Big|_x=-\frac{\pa \vp}{\pa t}\Big|_{\rho(x)}.
\end{equation}

\equationspace 
Intuitively, the equality of the energy functionals (\ref{HarmonicMapEnergyEq})
and (\ref{HarmonicEnergyEq}) then suggests that their
Euler-Lagrange equations should coincide, however up to a sign, coming from
the fact that an infinitesimal variation $\delta\psi(\rho)$ in one
corresponds to an infinitesimal variation $-\delta\psi(x)$ in the second. 
More precisely, we have

\begin{equation}
\label{LaplaceLegendreEq}
-\Delta_N u = f^{ab}\pa_{y^a}\pa_{y^b}\vp-f^{ab}\Gamma_{ab}^c\pa_{y^c}\vp
-{\textstyle\frac12}f^{ab}g(\nabla \pa_{y^a}\vp,\nabla\pa_{y^b}\vp).
\end{equation}

\equationspace 
To demonstrate (\ref{LaplaceLegendreEq}), recall first the following formula for the second variation of the 
Legendre duals of a family of convex functions (parametrized by $t$, say) that have the same gradient image, that
follows by taking a variation of (\ref{LegendreTransformEq}) and using (\ref{LegendreGradientRelationEq}):

\begin{equation}
\label{LegendreSecondVariationEq}
\begin{array}{lll}
\dis\frac{\pa^2 u}{\pa t^2}\Big|_x
& =
\dis-\frac{\pa^2 \vp}{\pa t^2}\Big|_{(\nabla\vp)^{-1}(x)}
-\sum_{j=1}^n\frac{\pa^2 \vp}{\pa t \pa \rho_j}\frac{\pa ((\nabla \vp)^{-1}(x))_j}{\pa t}
\cr
& \dis= 
-\frac{\pa^2 \vp}{\pa t^2}\Big|_{(\nabla\vp)^{-1}(x)}
-\sum_{j=1}^n\frac{\pa^2 \vp}{\pa t \pa \rho_j}\frac{\pa (\pa u/\pa x_j)}{\pa t}
\cr
& \dis= 
-\frac{\pa^2 \vp}{\pa t^2}\Big|_{(\nabla\vp)^{-1}(x)}
-\langle \nabla (\pa \vp/\pa t)|_{(\nabla\vp)^{-1}(x)}, \nabla (\pa u/\pa t)|_x\rangle,
\end{array}
\end{equation}

\equationspace 
or more succinctly

\begin{equation}
\label{ClassicalSecondVariationLegendreEq}
-\ddot \vp=\ddot u+\langle\nabla\dot \vp,\nabla\dot u\rangle.
\end{equation}

\equationspace 
Now, the terms that are linear in the first derivatives on each side of (\ref{LaplaceLegendreEq})
are equal to each other by the first variation formula for the Legendre transform (\ref{FirstVariationLegendreEq}). 
Next, fix $y\in N$ and choose coordinates on $N$ for which $f^{ab}=\delta^{ab}$ at $y\in N$.
Then (\ref{ClassicalSecondVariationLegendreEq}) gives

$$
-\pa_{y^a}\pa_{y^a} u=\pa_{y^a}\pa_{y^b}\vp+\langle \nabla\pa_{y^a}u,\nabla\pa_{y^a}\vp\rangle.
$$

\equationspace 
But now 

$$
\pa_{x_j}(\pa_{y^a} u(x))=-\pa_{x_j}(\pa_{y^a}\vp(\rho))
=
-\pa_{\rho_k}\pa_{y^a}\vp\cdot\pa_{x_j}\rho_k
=
-\pa_{\rho_k}\pa_{y^a}\vp\cdot\pa_{x_j}(\nabla u)_k.
$$

\equationspace 
Therefore using (\ref{LegendreHessianRelationEq}) and the fact that $g_\vp$ is represented
in coordinates on the open orbit by $\nabla^2\vp$ we see that (\ref{LaplaceLegendreEq}) holds.
Thus, the Legendre transform sends solutions of (\ref{EellsSampsonFlowEq})
to solutions of (\ref{HeatFlowEq}).
\end{proof}

In general one does not expect the Euler-Lagrange equations of two equal 
functionals defined on two different spaces to transform to each other. In 
our situation this does happen and in essence is due to the fact that the 
Legendre transform eliminates the Christoffel symbols not only in a 
variational sense but pointwise.

Observe that Equation (\ref{LaplaceLegendreEq}) generalizes the well-known formula 
(\ref{ClassicalSecondVariationLegendreEq}) from convex analysis
for the second variation of a family of convex functions on $\R^m$ parametrized by $(\R, dx)$
that have the same gradient image.
The factor $\text\frac12$ in our formula comes from the conventions we used to relate the Riemannian and 
\K metrics.

\section{\label{SectionApproximating} The approximating sequence}

Given a harmonic map $\vp:N \to \hcal(T)$ we now define the purported approximating
sequence of harmonic maps $\vp_k: N \to \bcal_{k}(T)$. 
First, given a
family of toric  \K metrics $\psi$ parametrized by $\partial N$ we
`project' the family pointwise by $\FS_k \circ \Hilb_k$  onto
$\bcal_{k}(T)$ to obtain a family of toric Bergman metrics
parameterized by $\partial N$. Each of these  metrics  is
determined by its  $L^2$ norming constants, hence by the diagonal
matrices

$$
\diag(\qcal_{h^k_{\psi(q)}}(\alpha))_{\alpha\in kP\cap\Z^m}, \quad
q\in \partial N.
$$ 

\equationspace
For each $\alpha$, we solve the boundary problem

$$
\Delta \lambda_{\alpha}(y) = 0, \quad y\in N, 
$$
$$
\lambda_{\alpha}= \log\qcal_{h^k_{\psi(q)}}(\alpha), \quad q\in\partial N.
$$

\equationspace
We then map back to $\bcal_k$ via $\FS_k$ to obtain the family

$$ \vp_k(y,z) =
\frac1k\log\sum_{\alpha\in kP\cap \Z^m} e^{-\lambda_{\alpha}(y)}
|\chi_\alpha(z)|_{h_0^k}^2
 \in\bcal_{k}(T)
$$

\equationspace
of harmonic maps alluded to in Theorem 1.1. 
This may be written somewhat more explicitly in terms of the Green kernel: 

$$
\vp_k(y ,z)
=
\frac1k\log\sum_{\alpha\in kP\cap \Z^m}
|\chi_\alpha(z)|_{h_0^k}^2
\exp\Big(\int_{\partial N} \partial_{\nu(q)} G(y, q)\log||\chi_\alpha||^2_{h_{\psi(q)}}dV_{\pa N,f}(q) \Big).
$$

Our first  aim is to prove the $C^0$ convergence by showing

$$
\mskip-495mu
\vp_k(y,z)-\vp(y,z)=
$$
\begin{equation} \label{C0}
\frac1k\log\!\!\sum_{\alpha\in kP\cap \Z^m}\!
|\chi_\alpha(z)|_{h^k_{\vp(y)}}^2
\exp\Big(\int_{\partial N} \partial_{\nu(q)} G(y, q)\log||\chi_\alpha||^2_{h_{\psi(q)}}dV_{\pa N,f}(q) \Big)
=O({\text\frac{\log k}k}).
\end{equation}
We begin by rewriting the sum in a convenient way.

Put
\begin{equation}
\label{RkEq}
\rcal_k(y,\a):= \exp\Big(-\int_{\partial N} \partial_{\nu(q)} G(y, q)
\log\frac{\qcal_{h^k_{\vp(y)}}(\a)}{\qcal_{h^k_{\psi(q)}}(\a)} dV_{\pa N,f}(q)\Big).
\end{equation}

Then proving (\ref{C0}) is equivalent to proving
\begin{equation}
\label{ConvergenceEq}
\frac1k\log\sum_{\a\in kP\cap\Z^m}\rcal_k(y,\a)\pcal_{h^k_{\vp(y)}}(\a,z)
=O(\log k/k).
\end{equation}

Put $u_y:=u_{\vp(y)}=u(y,\cdot)$, for $y\in N$.
In light of the results in the geodesic case \cite{SoZ2} 
we expect the asymptote of $\rcal_k$ to be the following:

\begin{defin} \label{RINFTY}  
Let the metric volume ratio be the function on $N \times P$ defined by

$$
\rcal_{\infty}(y, x):=
\exp\Big(-{\textstyle\frac12}\int_{\partial N}\partial_{\nu(q)} G(y, q)
\log
\frac {\det \nabla^2 u_y(x)} {\det \nabla^2 u_{q}(x)}dV_{\pa N,f}(q)\Big).
$$
\end{defin}

Note that by (\ref{HessianSymplecticPotentialEq}) we have
$$
\rcal_\infty(y,x)
=
\exp\big(-{\textstyle\frac12}\int_{\partial N}\partial_{\nu(q)} G(y, q)
\log\frac{\delta_{\psi(q)}(x)}{\delta_{\vp(y)}(x)}dV_{\pa N,f}(q)\big),
$$
and therefore $\rcal_\infty\in C^\infty(N\times P)$ (up to the boundary).

In light of Lemma \ref{PLemma} it will be useful to express the ratio $\rcal_k$ in terms
of the functions $\pcal_{h^k}(\a)$ (\ref{PalphaEq}) in the following form:

\begin{lem}
\label{RkLemma}
One has
$$
\rcal_k(y, \alpha)
=
\exp\Big(-\int_{\partial N}\partial_{\nu(q)} G(y, q)
\log\frac{\pcal_{h_{\psi(q)}^k}(\alpha)}{\pcal_{h_{\vp(y)}^k}(\alpha)}dV_{\pa N,f}(q)\Big).
$$

\end{lem}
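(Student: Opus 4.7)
The approach is to re-express the peak norms $\pcal_{h^k_\vp}(\alpha)$ in terms of the symplectic potential $u_\vp$ and then apply the Poisson representation for the harmonic family $y\mapsto u_{\vp(y)}$ produced in Proposition \ref{LegendreTransformProp}. First I would establish the clean identity
$$
\log\bigl(\pcal_{h^k_\vp}(\alpha)\cdot\qcal_{h^k_\vp}(\alpha)\bigr) = k\,u_\vp(\alpha/k).
$$
Indeed, on the open orbit in logarithmic coordinates $z=e^{\rho/2+i\theta}$, one has $|\chi_\alpha|^2_{h^k_\vp}=e^{\alpha\cdot\rho - k\vp(\rho)}$, and the peak occurs at $\rho_\star=(\nabla\vp)^{-1}(\alpha/k)=\nabla u_\vp(\alpha/k)$ by (\ref{LegendreGradientRelationEq}); substituting $\rho_\star$ into the Legendre duality formula (\ref{LegendreTransformEq}) collapses the exponent to $k u_\vp(\alpha/k)$.

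Using this identity for both $\vp(y)$ and $\psi(q)$ to rewrite $\log\bigl(\pcal_{h^k_{\psi(q)}}(\alpha)/\pcal_{h^k_{\vp(y)}}(\alpha)\bigr)$, and comparing with the definition (\ref{RkEq}) of $\rcal_k$, the claimed formula reduces to the vanishing
$$
\int_{\partial N}\partial_{\nu(q)}G(y,q)\bigl[u_{\psi(q)}(\alpha/k)-u_{\vp(y)}(\alpha/k)\bigr]\,dV_{\pa N,f}(q) = 0.
$$
Proposition \ref{LegendreTransformProp} says precisely that, for each fixed $x\in P$, the function $y\mapsto u_{\vp(y)}(x)$ is harmonic on $N$ with boundary values $u_{\psi(\cdot)}(x)$. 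Specializing the Poisson representation (\ref{DirichletHarmonicKernelSolEq}) to $x=\alpha/k$ yields $\int_{\partial N}\partial_{\nu(q)}G(y,q)u_{\psi(q)}(\alpha/k)dV_{\pa N,f}(q)=-u_{\vp(y)}(\alpha/k)$, while applying (\ref{DirichletHarmonicKernelSolEq}) to the constant function $1$ gives $\int_{\partial N}\partial_{\nu(q)}G(y,q)dV_{\pa N,f}(q)=-1$. The two contributions then cancel.

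There is no real obstacle here: the result is an algebraic rearrangement once the Legendre-dual expression for $\pcal\cdot\qcal$ is in hand. The analogue in the geodesic setting ($N=[0,1]$) exploits linearity of the straight-line harmonic extension; in the present generality the Dirichlet Green kernel plays exactly the same role, which is why the toric Legendre transform linearization developed in Section \ref{LAG} is the essential input. Since (\ref{SymplecticPotentialEq}) shows $u_\vp$ extends continuously to $\overline{P}$, boundary lattice points $\alpha\in k\partial P\cap\Z^m$ cause no difficulty in the Poisson argument.
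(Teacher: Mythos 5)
Your proposal is correct and follows essentially the same route as the paper: the key step in both is the Legendre-duality identity $\log\bigl(\qcal_{h^k}(\alpha)\pcal_{h^k}(\alpha)\bigr)=k\,u(\alpha/k)$ (the paper's (\ref{DualityEq})), after which harmonicity of $y\mapsto u_y(\alpha/k)$ and the Poisson representation (\ref{DirichletHarmonicKernelSolEq}) (together with $\int_{\partial N}\partial_{\nu(q)}G(y,q)\,dV_{\pa N,f}(q)=-1$, which the paper leaves implicit) give the claimed reformulation of $\rcal_k$.
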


\begin{proof}
By definition,
$$
\rcal_k(t, \alpha)
=
\exp\Big(\int_{\partial N}\partial_{\nu(q)} G(y, q)
\log\frac{\qcal_{h_{\psi(q)}^k}(\alpha)}{\qcal_{h_{\vp(y)}^k}(\alpha)}dV_{\pa N,f}(q)\Big).
$$

\equationspace
Specializing (\ref{LegendreTransformEq}) to the lattice point $\a$
we have

$$
u_\vp(\al)=\langle\al,2\log\mu^{-1}(\al)\rangle-\vp(\mu^{-1}(\al)),
$$

\equationspace
implying that
\begin{equation}
\label{DualityEq}
\log\qcal_{h^k}(\al)\pcal_{h^k}(\al)=ku(\a/k).
\end{equation}

\equationspace
Since $u$ is harmonic in $y$ it follows that
$$
\log\qcal_{h_{\vp(y)}^k}(\al)\pcal_{h_{\vp(y)}^k}(\al)
=
-\int_{\pa N}\partial_{\nu(q)} G(y, q)\log\qcal_{h_{\psi(q)}^k}(\al)\pcal_{h_{\psi(q)}^k}(\al)dV_{\pa N,f}(q),
$$
which together with the definition concludes the proof.
\end{proof}

\section{\label{SectionProof} Proof of Theorem 1.1}

Recall that the usual elliptic regularity theory applies to the operator 
$\Delta_N$ \cite{Au,GT}. Namely,
there exists $C=C(N,f)$ such that the Schauder estimates hold:

\begin{equation}
\label{SchauderEq}
||u||_{C^{2,1/2}(N)}\le C(||u||_{C^0(N)}+||v||_{C^{2,1/2}(\partial N)}).
\end{equation}

\equationspace
Moreover, the maximum principle implies that $||u||_{C^0(N)}\le ||v||_{C^0(\pa N)}$, and so
the estimates are only in terms of $||v||_{C^{2,1/2}(\partial N)}$.

First, we need the following asymptotic regularity for the coefficients $\rcal_k(t,\a)$.

\begin{lem}
\label{RLemma}
There exists a positive constant $C>0$  such that for all $k,y,\a$ one has
\begin{equation}
\label{RUniformEstimateEq}
1/C<\rcal_k(y,\a)<C.
\end{equation}
Moreover, fixing $\delta\in(0,1/2)$ one has 
\begin{equation}
\label{RkLimitEq}
\rcal_k(y,\a)-\rcal_\infty(y,\a/k)=O(k^{\delta-\frac12})
\end{equation}
uniformly in $(y,\a)$, and 
$\log \rcal_k(y,\a)$ is uniformly bounded in $C^2(N)$.
\end{lem}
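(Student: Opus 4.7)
The plan is to deduce all three claims from the asymptotic expansion of peak values in Lemma~\ref{PLemma}, combined with the representation of $\rcal_k$ from Lemma~\ref{RkLemma}. The crucial feature to exploit is that the ratio $\pcal_{h^k_{\psi(q)}}(\alpha)/\pcal_{h^k_{\vp(y)}}(\alpha)$ enjoys drastic cancellations: by (\ref{PEq}), the dimensional prefactor $Ck^{\frac12(m-\delta^\sharp_k(\alpha/k))}$ and the Bargmann--Fock factor $\pcal_{\BF,\delta_k}(\alpha/k)$ depend only on $\alpha/k$ and not on the underlying metric, so they drop out from the ratio. What remains is
\begin{equation*}
\frac{\pcal_{h^k_{\psi(q)}}(\alpha)}{\pcal_{h^k_{\vp(y)}}(\alpha)}
= \sqrt{\frac{\gcal_{\psi(q)}(\alpha/k)}{\gcal_{\vp(y)}(\alpha/k)}}\cdot\frac{1+R_k(\alpha/k,h_{\psi(q)})}{1+R_k(\alpha/k,h_{\vp(y)})},
\end{equation*}
and since $\gcal_\vp=(\delta_\vp\cdot\prod_{j\notin\fcal_{\delta_k}}l_j)^{-1}$ where the product of linear functions is independent of $\vp$, even the square root collapses to $\sqrt{\delta_{\vp(y)}/\delta_{\psi(q)}}(1+O(k^{\delta-\frac12}))$, uniformly in $\alpha,y,q$. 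Taking logarithms and integrating against the Poisson kernel $-\partial_{\nu(q)}G(y,q)$ then yields precisely the expression for $\log\rcal_\infty(y,\alpha/k)$ dictated by (\ref{HessianSymplecticPotentialEq}), up to an $O(k^{\delta-\frac12})$ error which is uniform because $\int_{\partial N}(-\partial_\nu G(y,q))\,dV_{\pa N,f}(q)=1$. This establishes (\ref{RkLimitEq}).

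For the two-sided bound (\ref{RUniformEstimateEq}), I would observe that $\rcal_\infty\in C^\infty(N\times P)$ takes values in a compact subinterval of $(0,\infty)$ on the compact set $N\times P$, so (\ref{RkLimitEq}) supplies the bound for all $k\ge k_0$; for each of the finitely many remaining $k<k_0$, the continuous function $\log\rcal_k(\cdot,\alpha)$ on the compact $N$, with $\alpha$ in the finite lattice $kP\cap\Z^m$, is bounded, and we enlarge $C$ to absorb these cases.

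For the $C^2(N)$ estimate on $\log\rcal_k$, I would exploit the identity $\int_{\partial N}\partial_{\nu(q)}G(y,q)dV_{\pa N,f}=-1$ to rewrite Lemma~\ref{RkLemma} as
\begin{equation*}
\log\rcal_k(y,\alpha) = H\bigl[\log\pcal_{h^k_{\psi(\,\cdot\,)}}(\alpha)\bigr](y) - \log\pcal_{h^k_{\vp(y)}}(\alpha),
\end{equation*}
where $H$ denotes the Dirichlet harmonic extension from $\partial N$ to $N$. Since $\vp|_{\partial N}=\psi$, this function vanishes on $\partial N$ and satisfies the Dirichlet problem $\Delta_N \log\rcal_k(\cdot,\alpha)=-\Delta_N\log\pcal_{h^k_{\vp(\cdot)}}(\alpha)$. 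The smooth dependence of $\vp(y)$ on $y$ from Proposition~\ref{LegendreTransformProp}, combined with the differentiated asymptotic (\ref{PDerivativesEq}), shows that both the forcing term and the boundary data of the harmonic extension are uniformly bounded in $C^{2,1/2}$ in $\alpha$ and $k$; the Schauder estimate (\ref{SchauderEq}) then gives the uniform $C^2(N)$ bound.

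The main obstacle is the uniformity of the $y$-derivative estimates as $\alpha/k$ approaches $\partial P$, where the lattice point $\alpha$ lies near many facets and the cardinality $\delta^\sharp_k(\alpha/k)$ jumps. The amplitudes $S_j(t,\alpha,k)$ in (\ref{PDerivativesEq}) carry this combinatorial complexity through the index set $\fcal_{\delta_k}$, and one must verify that their order-zero character and the size of $R_k$ survive uniformly across this transition; fortunately, this is precisely what Lemma~\ref{PLemma} provides, so the estimates do not degenerate near $\partial P$.
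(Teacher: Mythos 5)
Your proposal is correct and follows essentially the same route as the paper: the metric-independent dimensional and Bargmann--Fock prefactors in (\ref{PEq}) cancel in the ratio, the $\gcal$ quotient collapses to the $\delta_\vp$ quotient so that the harmonic extension produces $\log\rcal_\infty(y,\a/k)$ up to a uniform $O(k^{\delta-\frac12})$ error, and the $C^2(N)$ bound comes from (\ref{PDerivativesEq}) applied to the $\log\pcal_{h^k_{\vp(y)}}(\a)$ term together with the Schauder estimate (\ref{SchauderEq}) applied to the harmonic extension of the boundary term. The only cosmetic difference is your reframing of the $C^2$ estimate as an inhomogeneous Dirichlet problem, which is superfluous (and would strictly require H\"older control of the forcing term beyond what (\ref{PDerivativesEq}) states); bounding the two summands of $\log\rcal_k$ separately, as you in effect already do, is enough.
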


\begin{proof}
The Bargmann-Fock terms in (\ref{PEq}) depend only on the geometry of $P$ and not on $y\in N$ and so
are cancelled in the ratio $\rcal_k(y,\a)$. Therefore, by Lemma \ref{RkLemma},

$$\begin{array}{lll}
\log\rcal_k(y,\a)
& = &
\int_{\pa N}-\partial_{\nu(q)} G(y,q)\log\pcal_{h^k_{\psi(q)}}dV_{\pa N,f}(q)
-\log\pcal_{h^k_{\vp(y)}}
\\ && \\
& = &
\frac12\int_{\partial N}-\partial_{\nu(q)} G(y,q)\log\big(\gcal_{\psi(q)}(\frac\a k)\big)dV_{\pa N,f}(q)
-\frac12\log\big(\gcal_{\vp(y)}(\frac\a k)\big)
\\ && \\
&&
+\int_{\partial N}-\partial_{\nu(q)} G(y,q)\log\big(1+R_k(\frac\a k,h_{\psi(q)})\big)dV_{\pa N,f}(q)
\\ && \\ &&
-\log\big(1+R_k(\frac\a k,h_{\vp(y)})\big).
\\ &&
\end{array}
$$
The first two terms simplify to
$$
\frac12
\int_{\partial N}-\partial_{\nu(q)} G(y,q)\log\frac{\delta_{\vp(y)}(\frac\a k)}
{\delta_{\psi(q)}(\frac\a k)}dV_{\pa N,f}(q)
=\log\rcal_\infty(y,\a/ k),
$$
and so the Schauder estimates (\ref{SchauderEq}) together with Lemma \ref{PLemma} 
imply Equation (\ref{RkLimitEq}) and hence also the uniform estimate (\ref{RUniformEstimateEq}).

We now turn to prove the higher derivative estimates.
A first derivative of the fourth term yields, according to (\ref{PDerivativesEq}),
$$
\frac{S_1(y,\a,k)+R_k(\frac\a k,h_{\vp(y)})}{1+R_k(\frac\a k,h_{\vp(y)})},
$$
and this is uniformly bounded according to Lemma \ref{PLemma}. In a similar
fashion it follows that second derivatives are uniformly bounded as well.
Finally, the Schauder estimates (\ref{SchauderEq}) may be invoked again for the third term and these will
be uniform since the same argument as for the fourth term
implies that
$||\log (1+R_k(\frac\a k,h_{\psi(q)}))||_{C^2(\partial N)}$ is uniformly bounded.
\end{proof}

Note that the estimate (\ref{RUniformEstimateEq}) immediately
implies the $C^0$ convergence of $\vp_k$ to $\vp$ with remainder as in
(\ref{C0})
since we have a asymptotic expansion for the \Szego kernel that to first order equals
\begin{equation}
\label{SzegoKernelEq}
\Pi_{h^k_{\vp(y)}}(z,z)=\sum_{\a\in kP\cap\Z^m}\pcal_{h^k_{\vp(y)}}(\a,z)=1+O(k^{-1}).
\end{equation}

We now turn to showing
$C^1$ and $C^2$ convergence. In other words, our aim is now to show that 
the $C^2(N\times M)$ norm of the left hand side of (\ref{ConvergenceEq}) is still $O(k^{\delta-\frac12})$.
In order to prove these estimates it is crucial to make use of some cancellations. These can be understood
as follows. When one replaces all the coefficients $\rcal_k(y,\a)$ by a constant, one reduces to the 
case of a zero-dimensional map, or equivalently to the known asymptotic expansion of the \Szego kernel that
may be differentiated any number of times with a small error.
Now there are two cases. When a coefficient $\rcal_k(y,\a)$ or a derivative thereof only multiplies
a normalized monomial $\pcal_{h^k_{\vp(y)}}(\a,z)$ it is enough to use the uniform estimates
given by Lemma \ref{RLemma} and one does not need to keep track of error terms.
However, as is usually the case, if the coefficient $\rcal_k(y,\a)$ or a derivative thereof multiplies another term
that itself depends on $k$, one needs to keep track of the remainder of order $O(k^{\delta-\frac12})$
given by Lemma \ref{RLemma}. When such an error is introduced we simultaneously apply 
Lemma \ref{LocalizationLemma} to localize to those lattice points satisfying
$|\a|\le k^{\frac12+\delta}$. Remembering the overall factor of $\frac1k$ one then estimates the remainders thus
introduced.

Let us now consider derivatives solely in the $M$-directions.
A derivative of (\ref{ConvergenceEq}) in the $\rho_j$ 
directions amounts to multiplying each coefficient in
the sum (\ref{ConvergenceEq}) by a factor of
$$
k((\nabla\vp_y)(z)-{\textstyle\frac{\alpha_j}{k}})_j=k(\mu_y(z)-{\textstyle\frac{\alpha}{k}})_j.
$$
(recall that the moment map $\mu_y$ is the gradient of the open orbit \K potential 
$\vp(e^\rho)$).

Namely in the interior of $P$ one has,

$$
\frac{\pa}{\pa\rho_j}(\vp_k-\vp)(y,z)=
\frac1k\frac{
\sum_{\a\in kP\cap\Z^m}k(\mu_y(z)-\frac\a k)\rcal_k(y,\a)\pcal_{h_{\vp(y)}^k}(\a)
}
{\sum_{\a\in kP\cap\Z^m}\rcal_k(y,\a)\pcal_{h_{\vp(y)}^k}(\a,z)}.
$$

\equationspace
The factor of $k$ is cancelled by the overall factor of $\frac1k$ and 
the coefficients $\rcal_k(y,\a)$ are uniformly bounded due to (\ref{RUniformEstimateEq}). 
Thus by Lemma \ref{LocalizationLemma} one may restrict to those $\a$ such that 
$|\mu_y(z)-{\textstyle\frac{\alpha}{k}}|\le k^{\delta-\frac12}$ (introducing an error $O(k^{-M})$ for 
some large $M>0$). It follows then that 

$$
\frac{\pa}{\pa\rho_j}(\vp_k-\vp)(y,z)=O(k^{\delta-\frac12}).
$$

\equationspace
Near the boundary of $P$ one performs the same computation but with respect to the slice-orbit coordinates
(the same remark applies to all the computations in this Section).
Note that the argument reduced to the one in \cite{SoZ2}, \S 7.2, once we had (\ref{RUniformEstimateEq}).

Next, we consider second derivatives in the $M$-directions. Symmetrizing sums (see \cite{SoZ2}, \S 8)
one obtains in the interior of $P$,

$$
\mskip-370mu\frac{\pa^2}{\pa\rho_i\pa\rho_j}(\vp_k-\vp)(y,z)=
-\frac{\pa^2 \vp(y,z)}{\pa\rho_i\pa\rho_j}
$$
$$
+\frac1k
\frac
{{\textstyle\frac12}\sum_{\a,\beta\in kP\cap\Z^m}(\a_i-\beta_i)(\a_j-\beta_j)
\rcal_k(y,\a)\rcal_k(y,\be)\pcal_{h^k_{\vp(y)}}(\a,z)\pcal_{h^k_{\vp(y)}}(\beta,z)}
{\sum_{\a\in kP\cap\Z^m}\rcal_k(y,\a)\pcal_{h_{\vp(y)}^k}(\a,z)}.
$$

\equationspace
Equation (\ref{RkLimitEq}) allows to reduce the computations to those in the case $N=[0,1]$:
After localizing (keeping only those $\a$ that are $O(k^{\frac12+\delta})$-close to $k\mu_y(z)$) 
we have the estimate $\frac1k(\a_i-\beta_i)(\a_j-\beta_j)=O(k^{2\delta})$.
We then replace the coefficients $\rcal_k(y,\a)$
by the uniform constant (independent of $\a$) $\rcal_\infty(y,\mu_y(z))$ 
at the price of an error $O(k^{3\delta-\frac12})$. But now what is left is then precisely
cancelled by $-\frac{\pa^2 \vp(y,z)}{\pa\rho_i\pa\rho_j}$ (up to an error of $O(k^{-2})$) due to the 
complete asymptotics of the \Szego kernel of a single metric.
To prove this last claim,
we consider the situation of a family of \Szego kernels parametrized by a compact
manifold $N$, corresponding to the family of Hermitian metrics $h_y,\; y\in N$. 
In the toric situation this may be written explicitly as
$$
\Pi_k(z,z)
:=
\Pi_{h^k_y}(z,z)
=
\sum_{\alpha\in kP\cap \Z^m}
\frac{|\chi_\a(z)|^2_{h_y^k}}{\qcal_{h_y^k}(\a)}
=
\sum_{\alpha\in kP\cap \Z^m}
\frac{e^{\langle\a,\rho\rangle-k\vp_y}}{\qcal_{h_y^k}(\a)}.
$$
Then, $\vp(y,z)+\frac1k\log\Pi_{h^k_y}(z,z)=O(k^{-1})$ (note the minus sign
convention $\o=-\frac\i2\ddbar\vp$) has a complete
asymptotic expansion, and a first space derivative gives
\begin{equation}
\label{SzegoKernelSpaceDerivativeEq}
\frac{\pa\vp(y,z)}{\pa\rho_j}
+
O(k^{-2})
=
\frac{\pa\vp(y,z)}{\pa\rho_j}
+
\frac1k
\frac{\pa\log \Pi_k(z,z)}{\pa\rho_j}
=
\frac1k(\Pi_k(z,z))^{-1}
\sum_{\alpha\in kP\cap \Z^m}
\a_j
\frac{e^{\langle\a,\rho\rangle}}{\calQ_{h_y^k}(\a)}.
\end{equation}

\equationspace
Similarly a second space derivative 
takes the form
\begin{equation}
\label{SzegoKernelSpaceDerivativeEq}
\begin{array}{lll}
\displaystyle\frac{\pa^2\vp(y,z)}{\pa\rho_i\pa\rho_j}+O(k^{-2})
& =\displaystyle
\frac1k(\Pi_k(z,z))^{-2}
\bigg[
\sum_{\alpha\in kP\cap \Z^m}
\a_i\a_j
\frac{\displaystyle e^{\langle\a,\rho\rangle}}{\displaystyle\calQ_{h_y^k}(\a)}
\displaystyle\sum_{\beta\in kP\cap \Z^m}
\frac{\displaystyle e^{\langle\be,\rho\rangle}}{\displaystyle\calQ_{h_y^k}(\be)}
\cr
\displaystyle
&\qquad\qquad\qquad\qquad\displaystyle
-
\!\!
\sum_{\alpha\in kP\cap \Z^m}
\a_i
\frac{\displaystyle e^{\langle\a,\rho\rangle}}{\displaystyle\calQ_{h_y^k}(\a)}
\sum_{\beta\in kP\cap \Z^m}
\be_j
\frac{\displaystyle e^{\langle\be,\rho\rangle}}{\displaystyle\calQ_{h_y^k}(\be)}
\bigg]
\cr
\displaystyle
& =\displaystyle
\frac{(\Pi_k(z,z))^{-2}}k
{\textstyle\frac12}
\sum_{\alpha,\beta\in kP\cap \Z^m}
(\alpha_i-\beta_i)(\alpha_j-\beta_j)
\frac{\displaystyle e^{\langle\a+\be,\rho\rangle}}{\displaystyle\calQ_{h_y^k}(\a)\calQ_{h_y^k}(\be)}
\end{array}
\end{equation}
(by symmetrizing sums).
In conclusion we have proved the claim, and hence the convergence of the second
space derivatives.

Therefore it remains to consider derivatives that also involve the $N$-directions.

We consider first one derivative in the $N$-directions. One has

\begin{equation}
\label{FirstNDerivativeEq}
\begin{array}{lll}
&\dis\frac{\pa}{\pa y^a}(\vp_k-\vp)(y,z)
\cr
&\dis\;=-\frac{\pa\vp}{\pa y^a}
+\frac1k
\frac{
\sum_{\a\in kP\cap\Z^m}\dis
\calR_k(y,\a)\frac{\dis e^{\langle\a,\rho\rangle}}{\dis\calQ_{h^k(y)}(\a)}
\pa_{y^a}\log\frac{\calR_k(y,\a)}{\dis\calQ_{h^k(y)}(\a)}
}
{
\sum_{\a\in kP\cap\Z^m}\dis
\calR_k(y,\a)
\frac{\dis e^{\langle\a,\rho\rangle}}{\dis\calQ_{h^k(y)}(\a)}
}.
\end{array}
\end{equation}

\equationspace
Since the asymptotic expansion (\ref{SzegoKernelEq}) can be differentiated
and is uniform over compact families \cite{C,Z} we have
\begin{equation}
\label{SzegoKernelAsymptotics}
O(k^{-2})
=\dis
\frac1k\frac{\pa}{\pa y^a}\log\Pi_{h^k_{\vp(y)}}(z,z)
=
-\frac{\pa\vp}{\pa y^a}
+\frac1k
\frac{
\sum_{\a\in kP\cap\Z^m}
\frac{\dis e^{\langle\a,\rho\rangle}}{\dis\calQ_{h^k(y)}(\a)}
\pa_{y^a}\log\frac{1}{\dis\calQ_{h^k(y)}(\a)}
}
{\sum_{\a\in kP\cap\Z^m}\frac{\dis e^{\langle\a,\rho\rangle}}
{\dis\calQ_{h^k(y)}(\a)}}
.
\end{equation}
First note that the term
$$
\frac1k
\frac{
\sum_{\a\in kP\cap\Z^m}
\calR_k(y,\a)
\frac{\dis e^{\langle\a,\rho\rangle}}
{\dis\calQ_{h^k(y)}(\a)}
\pa_{y^a}\log\calR_k(y,\a)
}
{\sum_{\a\in kP\cap\Z^m}\frac{\dis e^{\langle\a,\rho\rangle}}
{\dis\calQ_{h^k(y)}(\a)}}
$$
is of order $O(k^{-1})$ by Lemma \ref{RLemma}.
Thus, we are left with the task of comparing the last term of (\ref{SzegoKernelAsymptotics}) with 
$$
\frac1k
\frac{
\sum_{\a\in kP\cap\Z^m}
\calR_k(y,\a)
\frac{\dis e^{\langle\a,\rho\rangle}}
{\calQ_{h^k(y)}(\a)}
\pa_{y^a}\log\frac{1}{\calQ_{h^k(y)}(\a)}
}
{
\sum_{\a\in kP\cap\Z^m}\calR_k(y,\a)\frac{\dis e^{\langle\a,\rho\rangle}}
{\calQ_{h^k(y)}(\a)}
}
$$
We now localize the sums about the image of the moment map using 
Lemma \ref{LocalizationLemma} introducing negligible errors (of arbitarily high order $O(k^{-M})$).
Then we use Lemma \ref{RLemma} to replace each occurrence of $\calR_k(y,\a)$ by
$\calR_\infty(y,\mu_y(z))$ plus an error of order $O(k^{\delta-1/2})$ (the error comes both
from (\ref{RkLimitEq}) and the fact that we Taylor expand $\calR_\infty(y,\mu_y(z))$ about
$\calR_\infty(y,\a/k)$ and use the fact that since we localized the sum we have $|\mu_y(z)-\a/k|=O(k^{\delta-1/2})$).
In the terms involving $\calR_\infty(y,\mu_y(z))$ the factors of $\calR_\infty(y,\mu_y(z))$
actually cancel and so cancel with the last term of (\ref{SzegoKernelAsymptotics}) after localizing the latter.

It remains to show that the error term overall contributes $O(k^{\delta-1/2})$ to the sum
(\ref{FirstNDerivativeEq}). To that end, because of the factor of $1/k$ it is enough to show 
that there exists a uniform constant $C>0$ independent of $k$ such that
\begin{equation}
\label{DerviativeOfQEstimateEq}
|\pa_{y^a}\log\calQ_{h^k(y)}(\a)|\le Ck.
\end{equation}
Recall that the duality (\ref{DualityEq}) of Lemma \ref{RkLemma} implies 
\begin{equation}
\label{QDerivativeEq}
\pa_{y^a}\log\calQ_{h^k_{\vp(y)}}(\a)=k\pa_{y^a} u_y(\a/k)-\pa_{y^a}\log\calP_{h^k_{\vp(y)}}(\a).
\end{equation}
The second term of the right hand side is uniformly bounded by applying (\ref{PDerivativesEq}).
To evaluate the first term recall that
\begin{equation}
\label{uEq}
u_y=-\int_{\partial N}u_q\partial_{\nu(q)} G(y,q)dV_{\partial N,f}(q), \quad y\in N.
\end{equation}
In terms of canonical symplectic potential $u_0$ of (\ref{SymplecticPotentialEq}) one may 
write $u_y=u_0+f_y$ for some globally smooth function $f_y\in\calL\H(T)$ on $P$ and thus we have
\begin{equation}
\label{fEq}
\pa_{y^a} u_y=\pa_{y^a} f_y=-\pa_{y^a}\int_{\partial N}f_q\partial_{\nu(q)} G(y,q)dV_{\partial N,f}(q), \quad y\in N.
\end{equation}
This is uniformly bounded according to the Schauder estimates.
Combining the above the estimate (\ref{DerviativeOfQEstimateEq}) follows.

In sum we have shown that
$$
\frac{\pa}{\pa y^a} (\vp_k-\vp)(y,z)=O(k^{\delta-1/2}),
$$
which concludes the case of a single $N$-derivative.

We now consider the case of mixed second derivatives.
We will always assume $\a,\beta\in kP\cap\Z^n$ and so omit that from the summation notation in what follows. 
To simplify the notation further we will fix a point $(y,z)\in N\times M$ and
use the following abbreviations:

$$
\begin{array}{lll}
& \dis\pa_a:=\pa_{y^a}=\frac{\pa}{\pa y^a},\; \pa_{ab}:=\pa_{y^a}\pa_{y^b}=\frac{\pa^2}{\pa y^a\pa y^b},\cr\cr
& \dis\calR_\a:=\calR_k(y,\a), \; \calQ_\a:=\calQ_{h^k_{\vp(y)}}(\a), \; 
\calP_\a:=\calP_{h^k_{\vp(y)}}(\a), \;
\widetilde\calP_\a:=\frac{\dis e^{\langle\a,\rho\rangle}}
{\calQ_{h^k(y)}(\a)}.
\end{array}
$$

\equationspace
Symmetrizing sums again, it follows that

$$
\mskip-370mu\frac{\pa^2}{\pa y^a\pa\rho^j}(\vp_k-\vp)(y,z)=-\frac{\pa^2\vp(y,z)}{\pa y^a\pa\rho^j}
$$
\begin{equation}
\label{MixedDerivativeFirstEq}
\mskip200mu+\frac1k
\frac{
\frac12
\sum_{\a,\beta}(\a_j-\beta_j)\frac{\pa}{\pa y^a}
\log\left({\displaystyle\frac{\calR_\a}{\calQ_\a}\frac{\calQ_\be}{\calR_\be}}
 \right)
\calR_\a\calR_\beta\widetilde\calP_\a\widetilde\calP_\be
}
{\Big(
\sum_{\a}\calR_\a\widetilde\calP_\a\Big)^2
}
.
\end{equation}
Localizing sums exchanges the term $(\a_j-\beta_j)$ for $O(k^{\frac12+\delta})$
up to an error of $O(k^{-M})$ for some large $M>0$. 
By applying Lemma \ref{RLemma} the term
$$
\frac1k
\frac{
\frac12
\sum_{\a,\beta}(\a_j-\beta_j)\frac{\pa}{\pa y^a}
\log\left({\displaystyle\frac{\calR_\a}{\calR_\be}}
 \right)
\calR_\a\calR_\beta\widetilde\calP_\a\widetilde\calP_\be
}
{\Big(
\sum_{\a}\calR_\a\widetilde\calP_\a\Big)^2
}
$$
is $O(k^{\delta-1/2})$. Now we replace the coefficients
$\calR_k$ in (\ref{MixedDerivativeFirstEq}) for the lattice points $\a$ that remain (near $\mu_y(z)$) 
by the uniform constant $\calR_\infty(y,\mu_y(z))$ plus an error of order
$O(k^{\delta-1/2})$. Any term that does not multiply
$\log\frac{\calQ_\be}{\calQ_\a}$ is of order $O(k^{\delta-1/2})$, or smaller, 
by using Lemma \ref{RLemma}. Now there are two kinds of terms left to estimate. The
first involve $\log\frac{\calQ_\be}{\calQ_\a}$ with all $\calR_k$ replaced by
$\calR_\infty(y,\mu_y(z))$. Then the coefficients $\calR_\infty(y,\mu_y(z))$ cancel
out and we are left with the \Szego kernel approximation of 
$\frac{\pa^2\vp(y,z)}{\pa y^a\pa\rho^j}$ (up to $O(k^{-2})$) and this 
cancels with $-\frac{\pa^2\vp(y,z)}{\pa y^a\pa\rho^j}$ appearing in (\ref{MixedDerivativeFirstEq}).
The second type of contributions comes from error terms of order $O(k^{\delta-1/2})$
multiplying $\log\frac{\calQ_\be}{\calQ_\a}$. By using  (\ref{QDerivativeEq})
we may express $\log\frac{\calQ_\be}{\calQ_\a}$ in terms of the global symplectic potentials 
and using in addition the fact that $\a$ and $\be$ are localized to a neighborhood
of size comparable to $k^{1/2+\delta}$ about $k\mu_y(z)$ we obtain

\begin{equation}
\label{DerivativeQRatioEq}
\Big|\pa_a\log\left(\frac{\calQ_\beta}{\calQ_\a}\right)\Big|
\le 
C+k|\pa_a f_y(\beta/k)-\pa_a f_y(\a/k)|
\le C+kC_1|(\a-\beta)/k|=O(k^{\frac12+\delta}),
\end{equation}

\equationspace
where $C_1$ is the Lipschitz constant of the smooth function $\pa_a f_y$ (as a function of $N$).
By the maximum principle $C_1$ is uniformly bounded in terms of the boundary 
data (i.e., $\FS_k\circ\Hilb_k(\psi)$) since
$\pa_a f_y$ is in fact harmonic in the $N$-variables.
This implies that the second type of contributions are of order $O(k^{2\delta-1})$, due
to the overall factor of $1/k$.

Note that the symmetrization of the sums was crucial here.
In sum,
$$
\frac{\pa^2}{\pa y^a\pa\rho^j}(\vp_k-\vp)(y,z)=O(k^{\delta-\frac12}).
$$

Finally, we consider the case of two derivatives in the $N$-directions.
This case is somewhat more involved than the previous ones and unlike in the case $N=[0,1]$ 
we also need to consider mixed $N$-derivatives.
We have  

\begin{equation}
\label{SecondDerivativeEq}
\begin{array}{lll}
&\mskip-30mu\dis\pa_{ab}(\vp_k-\vp)(y,z)
= -\pa_{ab}\vp
\cr & \dis
\mskip-30mu+ \frac1k
\frac{
\sum_{\a,\beta}\dis\calR_\a\widetilde\calP_\a\calR_\beta\widetilde\calP_\be
\cdot
\Big[
\pa_{ab}
\log
\left(\frac{\calR_\a}{\calQ_\a}\right)+
{\textstyle\frac12}
\pa_a\log
\left(
\frac{\dis\calR_\a}{\dis\calQ_\a}
\frac{\dis\calQ_\beta}{\dis\calR_\beta}
\right)
\pa_b
\log
\left(
\frac{\dis\calR_\a}{\dis\calQ_\a}
\frac{\dis\calQ_\beta}{\dis\calR_\beta}
\right)
\Big]
}
{
\Big(\sum_\a\calR_\a\widetilde\calP_\a\Big)^{-2}
}. 
\end{array}
\end{equation}

\noindent
We rewrite this as 

$$
\pa_{ab}(\vp_k-\vp)(y,z)
= -\pa_{ab}\vp+A+B+C,
$$ 
where 

\begin{equation}
\label{SecondDerivativeSecondEq}
A=
\frac1k
\frac{
\sum_{\a,\beta}\dis\calR_\a\widetilde\calP_\a\calR_\beta\widetilde\calP_\be
\cdot
\Big[
\pa_{ab}
\log
\left(\frac{1}{\dis\calQ_\a}\right)+
{\textstyle\frac12}
\pa_a\log
\left(
\frac{\dis\calQ_\beta}{\dis\calQ_\a}
\right)
\pa_b
\log
\left(
\frac{\dis\calQ_\beta}{\dis\calQ_\a}
\right)
\Big]
}
{
\Big(\sum_\a\dis\calR_\a\widetilde\calP_\a\Big)^{-2}
}, 
\end{equation}

\begin{equation}
\label{SecondDerivativeThirdEq}
B
=\frac1k
\frac{
\sum_{\a,\beta}\dis\calR_\a\widetilde\calP_\a\calR_\beta\widetilde\calP_\be
\cdot
\Big[
\pa_{ab}
\log
\left({\calR_\a}\right)+
{\textstyle\frac12}
\pa_a\log
\left(
\frac{\dis\calR_\a}{\dis\calR_\beta}
\right)
\pa_b
\log
\left(
\frac{\dis\calR_\a}{\dis\calR_\beta}
\right)
\Big]
}
{
\Big(\sum_\a\dis\calR_\a\widetilde\calP_\a\Big)^{-2}
}, 
\end{equation}

\begin{equation}
\label{SecondDerivativeFourthEq}
C
=
\frac1k
\frac{
\sum_{\a,\beta}\dis\calR_\a\widetilde\calP_\a\calR_\beta\widetilde\calP_\be
\cdot
\Big[
\pa_a\log
\frac{\dis\calR_\a}{\dis\calR_\beta}
\pa_b
\log
\frac{\dis\calQ_\beta}{\dis\calQ_\a}
\Big]
}
{
\Big(\sum_\a\dis\calR_\a\widetilde\calP_\a\Big)^{-2}
}. 
\end{equation}

\noindent
By differentiating (\ref{SzegoKernelEq}) we obtain similarly (analogously to the computations leading
to (\ref{SzegoKernelSpaceDerivativeEq})), 
\begin{equation}
\label{SzegoKernelSecondDerivativeEq}
O(k^{-2})=
-\pa_{ab}\vp
+
\frac1k
\frac{
\sum_{\a,\beta}\dis\widetilde\calP_\a\widetilde\calP_\beta
\cdot
\Big[
\pa_{ab}
\log
\left(\frac{1}{\calQ_\a}\right)+
{\textstyle\frac12}
\pa_a\log
\left(
\frac{\dis\calQ_\beta}{\dis\calQ_\a}
\right)
\pa_b
\log
\left(
\frac{\dis\calQ_\beta}{\dis\calQ_\a}
\right)
\Big]
}
{
\Big(\sum_\a\dis\widetilde\calP_\a\Big)^{-2}
}.
\end{equation}

\noindent
We now localize the sums to a ball of radius $k^{\frac12+\delta}$ about $k\mu_y(z)$
introducing a negligible remainder/error and replace the two occurrences of $\calR_\gamma$ 
outside of the square brackets in (\ref{SecondDerivativeEq}) as well as in the denominator 
by the lattice-point-independent constant
$\calR_\infty(y,\mu_y(z))$ (in what follows we refer to this operation as ``replacement") 
introducing an error of order $O(k^{\delta-\frac12})$ for each replacement.
First, observe that the replacement in the denominator is negligible.
What remains to be checked is that the overall error introduced by replacements elsewhere 
is of order $O(k^{\delta-\frac12})$.

In the term $A$ these replacements introduce a term that is cancelled by substituting
the expression for $-\pa_{ab}\vp$ given by (\ref{SzegoKernelSecondDerivativeEq}) 
into (\ref{SecondDerivativeEq}). In addition we introduce error terms. 
Let $\epsilon_\gamma:=\calR_\gamma-\calR_\infty(\mu_y(z))=O(k^{\delta-\frac12})$ 
for each lattice point $\gamma$ in a localized sum.
The highest order remainders are terms of the form
$\frac1k\epsilon_\gamma\cdot (B_1+B_2+B_3)$ where  
$$
\begin{array}{lll}
B_1 & :=B_{1,1}+B_{1,2}:=\dis
\pa_{ab}
\log
\left(\frac{1}{\dis\calQ_\a}\right)+
{\textstyle\frac12}
\pa_a\log
\left(
\frac{\dis\calQ_\beta}{\dis\calQ_\a}
\right)
\pa_b
\log
\left(
\frac{\dis\calQ_\beta}{\dis\calQ_\a}
\right),
\cr
B_2 & :=\dis
\pa_{ab}\log\calR_\a
+
{\textstyle\frac12}
\pa_a\log
\left(
\frac{\dis\calR_\a}{\dis\calR_\beta}
\right)
\pa_b
\log
\left(
\frac{\dis\calR_\a}{\dis\calR_\beta}
\right),
\cr
B_3 & :=
\pa_a\log
\left(	
\frac{\dis\calR_\a}{\dis\calR_\beta}
\right)
\pa_b
\log
\left(
\frac{\dis\calQ_\beta}{\dis\calQ_\a}
\right).
\end{array}
$$
The errors introduced in the replacements in the term $A$ are
of the form $\frac1k\epsilon_\gamma B_1$.
The errors introduced in the replacements of the terms $B$ and $C$
are $\frac1k\epsilon_\gamma B_2$ and  $\frac1k\epsilon_\gamma B_3$, respectively.

First, $\frac1k\epsilon_\gamma B_2=O(k^{\delta-\frac32})$ by Lemma \ref{RLemma}.
To bound $B_1$ and $B_3$ we will use the estimate (\ref{DerivativeQRatioEq}) for 
the derivatives of the norming constants $\calQ_\a$. First, 
it gives directly that $\frac1k\epsilon_\gamma B_3=O(k^{2\delta-1})$.
Second, by squaring (\ref{DerivativeQRatioEq}) we also obtain 
$\frac1k\epsilon_\gamma B_{1,2}=O(k^{3\delta-\frac12})$. 
Finally, the maximum principle also gives, 
similarly to the argument proving (\ref{DerviativeOfQEstimateEq}), that
$$
\Big|\pa_{ab}\log\calQ_\a\Big|\le C+ k|\pa_{ab} f_y(\a/k)|\le C_2k,
$$
and thus $\frac1k\epsilon_\gamma B_{1,1}=O(k^{\delta-\frac12})$.
Altogether we have shown that all the remainders introduced by the replacements are
of order $O(k^{\delta-\frac12})$ for some $\delta\in(0,1/2)$.

Hence we have shown that
$$
\pa_{ab}(\vp_k-\vp)(y,z)=O(k^{\delta-\frac12}),
$$

\equationspace
and this concludes the proof of Theorem 1.1.

\end{document}